\newtheorem{theorem}{Theorem}
\newtheorem{proposition}{Proposition}[section]
\newtheorem{definition}{Definition}[section]
\newtheorem{lemma}{Lemma}[section]
\newtheorem{remark}{Remark}[section]
\newtheorem{example}{Example}[section]
\renewenvironment{proof}{
\noindent{\bf Proof.}\rm} {\mbox{}\hfill\rule{0.5em}{0.809em}\par}
\begin{document}

\title{GR\"{O}BNER-SHIRSHOV BASES METHOD FOR  GELFAND-DORFMAN-NOVIKOV ALGEBRAS\footnote{Supported by the NNSF of China (11171118, 11571121) and
the Program on International Cooperation and Innovation,   Department
of Education,   Guangdong Province (2012gjhz0007).}}

\author{
L. A. Bokut\footnote {Supported by Russian Science Foundation (project
14-21-00065).} \\
{\small \ School of Mathematical Sciences, South China Normal
University}\\
{\small Guangzhou 510631, P. R. China}\\
{\small Sobolev Institute of Mathematics}\\
{\small pr. Akademika Koptyuga 4, Novosibirsk, Russian Federation}\\
{\small Department of Mathematics and Mechanics, Novosibirsk State University}\\
{\small  ul. Pirogova 2, Novosibirsk, Russian Federation}\\
{\small  bokut@math.nsc.ru}\\
\\
 Yuqun
Chen\footnote {Corresponding author.} \  and Zerui Zhang\\
{\small \ School of Mathematical Sciences, South China Normal
University}\\
{\small Guangzhou 510631, P. R. China}\\
{\small yqchen@scnu.edu.cn}\\
{\small 295841340@qq.com}}

\date{}

\maketitle

\noindent\textbf{Abstract:} We establish
Gr\"{o}bner-Shirshov bases theory for Gelfand-Dorfman-Novikov algebras over a field of characteristic $0$. As applications,
a PBW type theorem in Shirshov form is given and we provide an  algorithm for solving  the word problem of  Gelfand-Dorfman-Novikov algebras with finite homogeneous relations. We also construct a subalgebra of one generated free Gelfand-Dorfman-Novikov algebra which is not free.

\noindent\textbf{Key words:} Gr\"{o}bner-Shirshov basis;   Gelfand-Dorfman-Novikov algebra;   commutative differential algebra;    word problem.

\noindent\textbf{AMS 2010 Subject Classification:} 17D25,  16S15,   13P10, 08A50


\section{Introduction}\label{Intro}

Gelfand-Dorfman-Novikov algebras were introduced by I.M. Gelfand,
I.Ya. Dorfman \cite{Gelfand},  1979 in connection with Hamiltonian operators in the formal calculus of variations and A.A. Balinskii,   S.P. Novikov  \cite{Novikov 1},  1985 in connection with linear Poisson brackets of hydrodynamic type.  As it was pointed out in \cite{Novikov 2},  1985,   E.I. Zelmanov answered to a Novikov's question about simple finite dimensional Gelfand-Dorfman-Novikov algebras over a field of characteristic zero at the same year. He proved that there are no such non-trivial algebras,   see  \cite{Zelmanov},  1987. In 1989,   V.T. Filippov found first examples of simple infinite dimensional Gelfand-Dorfman-Novikov algebras  of characteristic $p\geq 0$ and simple finite dimensional Gelfand-Dorfman-Novikov algebras of characteristic $p>0$,   see \cite{Filippov 1}.  J.M. Osborn \cite{Osborn1,Osborn2,Osborn3},  1992-1994 gave the name Novikov algebra (he knew both papers \cite{Novikov 1,Gelfand}) and  began to classify simple finite dimensional Gelfand-Dorfman-Novikov algebras with prime characteristic $p>0$  and infinite dimensional ones with characteristic 0,   as well as irreducible modules \cite{Osborn4,Osborn5},  1995. Considering the contribution of Gelfand and Dorfman to Novikov algebras, we call Novikov algebras  as Gelfand-Dorfman-Novikov algebras in this paper.  There are also quite a few papers on the structure theory
(see,   for example,   X. Xu \cite{Xiaoping Xu 1,   Xiaoping Xu 2,   Xiaoping Xu 3,   Xiaoping Xu 4},   1995-2000,   C. Bai and D. Meng  \cite{Bai 1,   Bai 2,   Bai 3},   2001,   L. Chen,   Y. Niu and D. Meng \cite {Chen ly},   2008,   D. Burde and K. Dekimpe \cite{Burde 3},   2006)
and combinatorial theory of Gelfand-Dorfman-Novikov algebras,   and irreducible modules over Gelfand-Dorfman-Novikov algebras,   with applications to mathematics and mathematical physics.
The present paper is on combinatorial method of Gr\"{o}bner-Shirshov bases for Gelfand-Dorfman-Novikov algebras. Let us observe some combinatorial results. In \cite{Gelfand},   it was given an important observation by
 S.I. Gelfand that any differential commutative associative algebra  is a Gelfand-Dorfman-Novikov algebra under the new product $a\circ b=(Da)b$. This observation leads to a notion of the universal enveloping of a Gelfand-Dorfman-Novikov algebra (see and cf. \cite{trees}) that we use in the present paper. V.T. Filippov \cite{Filippov 2},  2001 proved that any Novikov nil-algebra of  nil-index $n$ with characteristic 0 is nilpotent (an analogy of Nagata-Higman theorem). He used essentially the Zelmanov theorem \cite{Zelmanov 2},  1988 that any Engel Lie algebra of index $n$ in characteristic 0 is nilpotent.
By the way,   Zelmanov \cite{Zelmanov 3},  1989 also proved the local nilpotency of any Engel Lie algebra of index $n$ in any characteristic. In 2002,   A. Dzhumadil'daev and  C. L\"{o}fwall \cite{trees} found structure of a free Gelfand-Dorfman-Novikov algebra
using trees and a free differential commutative algebra. We use essentially this result here.
Dzhumadil'daev \cite{codimension},   2011 found another linear basis of a free Gelfand-Dorfman-Novikov algebra using Young tableaux. This result was essentially used by L. Makar-Limanov and U. Umirbaev \cite {Freedom} in a proof of the Freiheitssatz theorem for Gelfand-Dorfman-Novikov algebras. Also they proved that the basic rank of the variety of Gelfand-Dorfman-Novikov algebras is one.

In the present paper,   we introduce Gr\"{o}bner-Shirshov bases method for Gelfand-Dorfman-Novikov algebras,   prove a
PBW type theorem in Shirshov form for Gelfand-Dorfman-Novikov algebras,   and provide an algorithm for solving the word problem for Gelfand-Dorfman-Novikov algebras with finite number of homogeneous relations.

Gr\"{o}bner and Gr\"{o}bner-Shirshov bases methods were invented by A.I. Shirshov,   a student of A.G. Kurosh,   for Lie algebras
and implicitely for associative algebras \cite{Sh62b},   1962 and non-associative (commutative anti-commutative) algebras \cite{Shirshov commutative},  1954,   by H. Hironaka for commutative topological algebras \cite{Hi64},   1964
and by B. Buchberger \cite{Bu70},   1965 for commutative algebras. As a prehistory, see A.I. Zhukov \cite{Zhukov}, 1950, another Kurosh's student.

Gr\"obner-Shirshov (Gr\"obner) bases methods deal with varieties and categories of (differential,   integro-differential,   PBW,   Leavitt,   Temperley-Lieb,   Iwahori-Hecke,   quadratic,   free products of two, over a commutative algebra, \dots) associative algebras,   (plactic,   Chines,   inverse,  \dots) semigroup algebras,   (Coxeter,   braid,   Artin-Tits,   Novikov-Boone,  \dots) group algebras,   semiring algebras,   Lie (restricted,   super-,   semisimple,   Kac-Moody,   quantum,   Drinfeld-Kohno,   over a commutative algebra, metabelian,   \dots) algebras,   associative conformal algebras,   Loday's (Leibniz,   di-,   dendriform) algebras,   Rota-Baxter algebras,   pre-Lie (i.e.,   right symmetric) algebras,   (simplicial,   strict monoidal,  \dots) categories,   non-associative (commutative,   anti-commutative,   Akivis,   Sabinin,  \dots) algebras,   symmetric (non-symmetric) operads,   $\Omega$-algebras,    modules,   and so on. Gr\"{o}bner-Shirshov bases method
is useful in homological algebra (Anick resolutions,   (Hochschild) cohomology rings of (Leavitt,   plactic,  \dots)
algebras),   in proofs of PBW type theorems (Lie algebra -- associative algebra,   Lie algebra --  pre-Lie algebra,   Leibniz algebra -- associative dialgebra,   Akivis algebra -- non-associative algebra,   Sabinin algebra - modules),   in algorithmic problems of algebras (solvable and unsolvable algorithmic problems),   in the theory of automatic groups and semigroups,   in independent constructions of Hall,
Hall-Shirshov  and Lyndon-Shirshov bases of a free Lie algebra,    on embedding theorems and many other applications.
For details one may see,   for example,   new surveys \cite{Bo2014}  and \cite{PBW}.

In this paper we prove a PBW type theorem in Shirshov form for Gelfand-Dorfman-Novikov algebras. The first of this kind of theorems is the following (see \cite{BokutKLM,  BoMa97,  BoMa99}).

Let $L=Lie(X|S)$ be a Lie algebra,   presented by generators $X$ and defining relations $S$ over a field $ k$ ,   $U(L)=k \langle X|S^{(-)}\rangle$  the universal enveloping associative algebra of  $L$ (here $S \Rightarrow  S^{(-)}$ using $[x,  y]\Rightarrow xy-yx$). Then  $S$ is a Lie Gr\"{o}bner-Shirshov basis in $Lie(X)$ if and only if $S^{(-)}$ is an associative Gr\"{o}bner-Shirshov basis in $k\langle X\rangle$.

As a corollary, let $S$ be a Lie Gr\"{o}bner-Shirshov basis (in particular,   $S$ be a multiplication table of $L$).  Then

(i) A linear basis of $U(L)$ consists of words $u_1u_2\dots u_k$,   $k\geq 0$,   where $u_{i}$'s are $S^{(-)}$-irreducible associative
Lyndon-Shirshov words (without brackets) in $X$,   $u_1\leq u_2\leq \dots \leq u_k$ in lexicographical order (meaning $a>ab$,   if $b\neq 1$) (in particular, a linear basis of $U(L)$ is PBW one if $S$ is a miltiplication table of $L$).

(ii) A linear basis of $U(L)$ consists of words $[u_1][u_2]\dots [u_k]$,   $k\geq 0$,   where $[u_i]$'s are $S$-irreducible
Lyndon-Shirshov Lie words in $X$,   $u_1\leq u_2\leq ...\leq u_k$ in lexicographical order.

(iii) A linear basis of $L$ consists of words $[u]$,   where $[u]$'s are $S$-irreducible
Lyndon-Shirshov Lie words in $X$.

For Gelfand-Dorfman-Novikov algebras we prove the following PBW type theorem in Shirshov form.

Let $GDN(X)$ be a free Gelfand-Dorfman-Novikov algebra, $k\{X\}$ be a free commutative differential algebra, $S\subseteq GDN(X)$ and $S^{c}$ a Gr\"{o}bner-Shirshov basis in $k\{X\}$,   which is obtained from $S$ by Buchberger-Shirshov algorithm in $k\{X\}$. Then

(i) $S'=\{uD^{m}s\mid s\in S^{c},   u\in [D^{\omega}X],   wt(u\overline{D^{m}s})=-1,   m\in \mathbb{N}\}$ is a Gr\"{o}bner-Shirshov basis in $GDN(X)$.

(ii)  The set $Irr(S')=\{w\in [D^{\omega}X] \mid w\neq u\overline{D^{t}s},    u\in [D^{\omega}X],   t\in \mathbb{N},   s\in S^{c},  wt(w)=-1\}=GDN(X)\cap Irr[S^{c}]$ is
a linear basis of $GDN(X|S)$. Thus,   any Gelfand-Dorfman-Novikov algebra $GDN(X|S)$ is embeddable into its universal enveloping commutative differential algebra $k\{X|S\}$.

Using Buchberger-Shirshov algorithm, we provide    algorithms for solving both the word problem for commutative differential algebras with finite number of $D\cup X$-homogeneous
defining relations and the word problem for Gelfand-Dorfman-Novikov algebras with finite number of $X$-homogeneous defining relations.
For Lie algebras it was proved by Shirshov in his original paper \cite{Sh62b},   see also \cite{Shir3}.  In general,   word problem for Lie algebras is unsolvable, see  \cite{Bo72}.  For Gelfand-Dorfman-Novikov algebras it remains unknown.
So far, the word problem (membership problem) for commutative differential algebras is solved mainly for the following cases \cite{word problem}:  radical ideals, isobaric (i.e., homogeneous with respect to  derivations) ideals, ideals with a finite or parametrical standard basis, and ideals generated by a composition of two differential polynomials (under some additional assumptions).

Finally,   we prove that the variety of Gelfand-Dorfman-Novikov algebras is not a Schreier one, i.e., not each subalgebra of a free Gelfand-Dorfman-Novikov algebra is free. The most famous  Schreier variety are the variety of groups \cite{Schreier}, the variety of non-associative  algebras \cite{Kurosh}, the variety of (non-associative) commutative and anti-commutative algebras \cite{Shirshov commutative}, the variety of Lie algebras \cite{Shirshov lie, witt}. For more details, see \cite{variety property, Umirbaev}.

\section{Free Gelfand-Dorfman-Novikov algebras}

A non-associative algebra
$A=(A,  \circ)$ is called a right-Gelfand-Dorfman-Novikov algebra \cite{codimension},   if $A$ satisfies the identities
$$
x\circ(y\circ z)-(x\circ y)\circ z=x\circ(z\circ y)-(x\circ z)\circ y,
$$
$$
x\circ (y\circ z)=y\circ(x\circ z).
$$

In the papers \cite{codimension,  trees},   the authors constructed the free Gelfand-Dorfman-Novikov algebra $GDN(X)$ generated by $X$ as follows:
A Young diagram is a set of boxes with
non-increasing numbers of boxes in each row. Rows and columns are numbered from top to bottom and from left to right.
Let $p$ be the number of rows and $r_i$ be the number of boxes in the $i$th row. To construct Gelfand-Dorfman-Novikov diagram,
we need to complement Young diagram by
one box in the first row. To construct Gelfand-Dorfman-Novikov tableaux on a well-ordered set $X$,   we need to fill Gelfand-Dorfman-Novikov diagrams
 by elements of $X$. Denote by $a_{i,  j}$ an element of $X$ in the box that is the cross of the $i$th row by the $j$th column. The filling rule is the following:

(a) $a_{i,  1}\geq a_{i+1,  1}$,     if $r_{i}=r_{i+1},  $ $  i = 1,   2,  \dots  ,    p-1$;

(b) The sequence $a_{p,  2}\cdots  a_{p,  r_{p}}a_{p-1,  2}\cdots  a_{p-1,  r_{p-1}}\cdots  a_{1,  2}\cdots  a_{1,  r_{1}+1}$   is non-decreasing.\\
Such a Gelfand-Dorfman-Novikov tableau corresponds to the following element of the free Gelfand-Dorfman-Novikov algebra:
\begin{eqnarray*}
 w&=&Y_{p}\circ(Y_{p-1}\circ(\cdots \circ(Y_{2}\circ Y_{1})\cdots  ))\ \ \mbox{(right-normed bracketing),  where }\\
 Y_{i}&=&(\cdots ((a_{i,  1}\circ a_{i,  2})\circ a_{i,  3}) \cdots \circ a_{i-1,  r_{i}-1})\circ a_{i,  r_{i}},   \ \  2\leq i \leq p,   \\
Y_{1}&=&(\cdots ((a_{1,  1}\circ a_{1,  2})\circ a_{1,  3}) \cdots \circ a_{1,  r_{1}})\circ a_{1,  r_{1}+1}
\end{eqnarray*}
(each $Y_{j}$ left-normed bracketing). In this case,   we say $w$ has degree $r_p+r_{p-1}+\dots +r_1+1$. We call such a $w$ as a Gelfand-Dorfman-Novikov tableau as well.
Such elements form  a linear basis of a free Gelfand-Dorfman-Novikov algebra generated by $X$ and we denote such  free Gelfand-Dorfman-Novikov algebra as $GDN(X)$, see  \cite{codimension,  trees}.

A commutative differential algebra $A=(A, \cdot, D)$  is a commutative associative algebra with one linear operator $D:\ A\rightarrow A$ such that
for any $a,b\in A,\ D(ab)=(Da)b+a(Db)$. We call such a $D$ a derivation of $A$.

Given a well-ordered set $X=\{a,   \ b,   \ c,   \dots  \}$,   denote $D^{\omega}X=\{D^{i}a\mid i\in \mathbb{N},  \ a\in X \}$,   $[D^{\omega}X]$ the free
 commutative monoid generated by $D^{\omega}X$ and $k$ a field of characteristic $0$.
 Let $D(1)=0,   D^{0}a=a,   D(D^{i}a)=D^{i+1}a$,   $D(\alpha u+\beta v)=\alpha Du+\beta Dv$ and $D(uv)=(Du)\cdot v+u\cdot D(v)$ for
  any $a\in X,  \ \alpha,   \beta \in k,  \ u,  v \in [D^{\omega}X]$ ($\cdot$ is often omitted). Then $(k[D^{\omega}X],   \ \cdot,   \ D)$ is a
  free commutative differential algebra over $k$,   see  \cite{differential algebra}.
  From now on we
denote $a$ as $a[-1]$,   $D^{i+1}a$ as $a[i]$,    and $(k[D^{\omega}X],   \ \cdot,   \ D)$ as $k[D^{\omega}X]$ or $k\{X\}$. Then $k\{X\}$ has a $k$-basis
as the set (also denote as $[D^{\omega}X]$) of all words of the form
 $$
 w=a_{n}[i_n]a_{n-1}[i_{n-1}]\cdots    a_{1}[i_1] \ or \ w=1,
 $$
 where $a_t\in X,  \ i_{t}\geq -1,    \ 1\leq t\leq n,   \ n\in \mathbb{N} $
and
$ (i_{n},   a_{n})\geq (i_{n-1},   a_{n-1}) \geq \dots  \geq (i_{1},  a_{1})\mbox{ lexicographically}$.
 For such $w\neq 1$,   we define the weight of $w$,   denoted by $wt(w)$,   to be $wt(w)=i_1+i_2+\dots +i_n$;
 the length of $w$,   denoted by $|w|$,   to be $|w|=n$; and the $D\cup X$-length of $w$,   denoted by $|w|_{D\cup X}$,
 to be $|w|_{D\cup X}=wt(w)+2n$,   which is exactly the number of $D$ and generators from $X$ that occur in $w$.
 For $w=1$,   define $wt(w)=|w|=|w|_{D\cup X}=0$. Furthermore,   if we define $\circ$ as
$$
f\circ g=(Df)g,  \ \ \  f,  g\in k\{X\},
$$
 then $(k\{X\},  \
 \circ)$ becomes a right-Gelfand-Dorfman-Novikov algebra. Its subspace
$$ span_{k}\{w\in [D^{\omega}X] \mid wt(w)=-1\},$$
is  a  subalgebra of $(k\{X\},  \
 \circ)$ (as Gelfand-Dorfman-Novikov algebra), we denote such subalgebra as $ GDN_{-1}(X)$. In  \cite{trees},  the authors showed that  the Gelfand-Dorfman-Novikov algebra homomorphism
$\varphi :GDN(X)\longrightarrow GDN_{-1}(X)$,   induced by $\varphi(a) =a[-1]$,   is an isomorphism. Therefore, $GDN_{-1}(X)$ is a free Gelfand-Dorfman-Novikov algebra generated by $X$,  which has a  $k$-basis $\{w\in [D^{\omega}X] \mid wt(w)=-1\}$. From now on, when no ambiguity arises,  we denote  both $GDN(X)$ and $GDN_{-1}(X)$ as $GDN(X)$ for convenient.

\section{Composition-Diamond lemmas}

\subsection{Monomial order}

We order $[D^{\omega}X]$ as follows.

For any $a[i],   b[j] \in D^{\omega}X$,   define
$$
a[i]< b[j] \ \Leftrightarrow \ (i,   a)<(j,  b)\  \mbox{ lexicographically}.
$$
For any $w=a_{n}[i_n]\cdots   a_{1}[i_1]\in [D^{\omega}X]$ with
 $a_{n}[i_n]\geq  \dots \geq a_{1}[i_1]$,   define
$$
ord(w)\triangleq (|w|,   a_{n}[i_n],  \dots  ,   a_{1}[i_1]).
$$

Then,   for any $u,  v \in [D^{\omega}X]$ we define
$$
u<v\Leftrightarrow ord(u)<ord(v) \ \mbox{ lexicographically}.
$$

It is clear that this is a well order on $[D^{\omega}X]$. We will use this order throughout this paper.

For any  $f\in k\{X\}$,    $\overline{f}$ means the leading word of $f$.  We denote the coefficient of $\overline{f}$ as $LC(f)$.

\begin{lemma}
Let the order $<$ on $[D^{\omega}X]$ be as above. Then
 \begin{enumerate}
 \item[(i)]\  $
 u<v\Rightarrow u\cdot w<v\cdot w,    \ \overline{Du}<\overline{Dv}
$
for any $u,  v,  w \in [D^{\omega}X],   \ u\neq 1 $.

\item[(ii)]\   $u<v\Rightarrow \overline{w\circ u}< \overline{w \circ v},  \ \overline{u\circ w}< \overline{v \circ w}
$
for any $u,  v,  w\in [D^{\omega}X]\setminus\{1\} $.

 \end{enumerate}

\end{lemma}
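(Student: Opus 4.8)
The plan is to prove part (i) directly from the definition of $ord$, and then obtain part (ii) as a formal consequence of (i).

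\medskip
\noindent\textbf{Part (i), the assertion $\overline{Du}<\overline{Dv}$.} The first step is to compute $\overline{Dw}$ explicitly. Writing $w=a_n[i_n]\cdots a_1[i_1]$ in normal form, $Dw$ is the sum, with positive integer coefficients, of the words obtained from $w$ by replacing a single letter $a_t[i_t]$ by $a_t[i_t+1]$; distinct letters yield distinct words (since $a[i]\mapsto a[i+1]$ is injective and strictly increasing), so there is no cancellation. All these words have length $|w|$, and because $a[i]<a[i+1]$ in the order on $D^{\omega}X$, a short comparison shows that the largest among them, hence $\overline{Dw}$, is the one obtained by incrementing the \emph{maximal} letter of $w$ (which then remains the maximal letter). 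Granting this, assume $u<v$. If $|u|<|v|$ then $|\overline{Du}|=|u|<|v|=|\overline{Dv}|$, so $\overline{Du}<\overline{Dv}$. If $|u|=|v|$, write the sorted letter-sequences of $u,v$ as $\alpha_1\ge\cdots\ge\alpha_n$ and $\beta_1\ge\cdots\ge\beta_n$: if $\alpha_1<\beta_1$ then incrementing each still gives $\alpha_1^{+}<\beta_1^{+}$, so $\overline{Du}<\overline{Dv}$; if $\alpha_1=\beta_1$ the leading letters of $\overline{Du},\overline{Dv}$ coincide and the comparison passes to $(\alpha_2,\dots,\alpha_n)$ versus $(\beta_2,\dots,\beta_n)$, handled by induction on the length.

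\medskip
\noindent\textbf{Part (i), the assertion $uw<vw$.} By iterating, it suffices to treat $w=c$ a single letter. If $|u|\ne|v|$, then $uc<vc$ is immediate from the length component of $ord$. If $|u|=|v|=n$, keep the notation $\alpha_1\ge\cdots\ge\alpha_n$, $\beta_1\ge\cdots\ge\beta_n$ for the sorted letter-sequences and let $k$ be the least index with $\alpha_k\ne\beta_k$, so $\alpha_i=\beta_i$ for $i<k$ and $\alpha_k<\beta_k$. Passing to normal forms, $uc$ and $vc$ are obtained by inserting $c$ into these two sequences; a case analysis on the position of $c$ relative to $\alpha_k$ and $\beta_k$ shows that, relative to the first $k-1$ (common) entries, $c$ is inserted at the same place in both sequences or strictly later in that of $v$, and in every case the resulting sequences have their first disagreement witnessing $uc<vc$. (Equivalently: adjoining the same letter to two multisets of equal cardinality leaves their reduced symmetric difference unchanged, hence preserves the order.) This bookkeeping with ties in the insertion position is the step I expect to be the most technical; everything else is routine.

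\medskip
\noindent\textbf{Part (ii).} Since $f\circ g=(Df)g$, we have $w\circ u=(Dw)\cdot u$ and $u\circ w=(Du)\cdot w$. By the computation above, $Dw$ is a sum, without cancellation, of words all of length $|w|$; since right multiplication by a fixed word preserves $<$ by (i), the leading word of a product of the form (such a sum)$\,\cdot\,g$ is $\overline{\,\cdot\,}\cdot g$, so $\overline{(Dw)\cdot u}=\overline{Dw}\cdot u$, $\overline{(Dw)\cdot v}=\overline{Dw}\cdot v$, $\overline{(Du)\cdot w}=\overline{Du}\cdot w$ and $\overline{(Dv)\cdot w}=\overline{Dv}\cdot w$ (all leading words are defined because $u,v,w\neq1$). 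Now $u<v$ gives $\overline{Dw}\cdot u<\overline{Dw}\cdot v$ by (i) applied with the fixed nonempty word $\overline{Dw}$ (the monoid being commutative), i.e. $\overline{w\circ u}<\overline{w\circ v}$; and $u<v$ gives $\overline{Du}<\overline{Dv}$ by the first half of (i), whence $\overline{Du}\cdot w<\overline{Dv}\cdot w$ again by (i), i.e. $\overline{u\circ w}<\overline{v\circ w}$.
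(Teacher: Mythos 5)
Your proposal is correct and follows essentially the same route as the paper: it identifies $\overline{Dw}$ as the word obtained by incrementing the maximal letter (the paper records this as $ord(\overline{Dw})=(|w|,a_n[i_n+1],\dots,a_1[i_1])$), deduces monotonicity of $u\mapsto\overline{Du}$ and of multiplication by a fixed word, and then derives (ii) from $\overline{(Df)g}=\overline{Df}\cdot g$. The only difference is one of detail: you spell out the insertion/no-cancellation bookkeeping that the paper dismisses as ``easy to see,'' and your argument is sound.
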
\label{monomial}
\begin{proof}
(i) Noting that $\cdot$ is commutative and associative,   it is easy to see that $u<v\Rightarrow u\cdot w<v\cdot w$.
For any $w=a_{n}[i_n]\cdots   a_{1}[i_1]\neq 1$,   with
 $a_{n}[i_n]\geq \dots \geq a_{1}[i_1]$,   we have $ord(\overline{Dw})= (|w|,   a_{n}[i_n+1],   \dots  ,   a_{1}[i_1])$,   so $u<v\Rightarrow  \overline{Du}<\overline{Dv}$.

 (ii) For any $u,  v,  w\in [D^{\omega}X]\setminus\{1\}$,   we have
 $\overline{u\circ w}=\overline{(Du)w}=\overline{Du}\cdot w$ and $\overline{v\circ w}=\overline{(Dv)w}=\overline{Dv}\cdot w$,
 so $u<v\Rightarrow \overline{u\circ w}<\overline{v\circ w}$. By the same reasoning,   $u<v\Rightarrow \overline{w\circ u}<\overline{w\circ v}$.
 \end{proof}

\subsection{ $S$-words  }

For any $S\subseteq k\{X\}$,   we denote $Id[S]$ the ideal of $k\{X\}$ generated by $S$ and
$$
k\{X|S\}\triangleq k\{X\}/ Id[S]
$$
the commutative
differential algebra generated by $X$ with defining relations $S$.
Since $Id[S]$ is closed under $\cdot$ and the derivation  $D$,   we have
$$Id[S]=span_{k}\{u D^{t}s \mid u\in [D^{\omega}X],   t\in \mathbb{N},   s\in S\}.$$
For any $u\in [D^{\omega}X],   t\in \mathbb{N},   s\in S $,   we call $u D^{t}s$ an $S$-word in $k\{X\}$. We call $u D^{t}s$ an $S$-word in $GDN(X)$ if $wt(\overline{uD^{t}s})=-1$ and $S\subseteq GDN(X)$.

Suppose $S\subseteq GDN(X)$  and denote $Id(S)$ the ideal of $GDN(X)$ generated by $S$. Then we have the following lemma.

\begin{lemma}\label{Novikov ideal}
Suppose $S\subseteq GDN(X)$. Then
$$
Id(S)=span_{k}\{uD^{t}s \mid u\in [D^{\omega}X],   t\in \mathbb{N},    s\in S,    wt(\overline{uD^{t}s})=-1\}.
$$
\end{lemma}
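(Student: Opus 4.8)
The plan is to recall that $GDN(X)$ is identified with $GDN_{-1}(X) = \mathrm{span}_k\{w\in[D^\omega X]\mid wt(w)=-1\}$ inside the differential algebra $k\{X\}$, with product $f\circ g=(Df)g$, and that $S\subseteq GDN(X)$ therefore consists of weight $-1$ elements. Write $V$ for the proposed span on the right-hand side. First I would verify $V\subseteq Id(S)$: it suffices to show each generator $uD^ts$ (with $u\in[D^\omega X]$, $t\in\mathbb N$, $s\in S$, and $wt(\overline{uD^ts})=-1$, equivalently $wt(u)+t=-1+(-t)$... more precisely $wt(u D^t s)=wt(u)+wt(s)+t=wt(u)+t-1$, so the constraint is $wt(u)+t=0$) lies in the ideal generated by $S$ in the Gelfand-Dorfman-Novikov algebra $GDN(X)$. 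The key observation is that right multiplication by a generator $a[i]=D^{i+1}a$ in $k\{X\}$ and the operators $D$ can each be realized, up to scalars and lower-degree corrections, by Gelfand-Dorfman-Novikov operations $x\mapsto x\circ y$ and $x\mapsto y\circ x$ restricted to weight $-1$: since $y\circ x=(Dy)x$ and $x\circ y=(Dx)y$, one recovers both "multiply by $Dy$" and "apply $D$ then multiply by $y$". By an induction on the $D\cup X$-length of the multiplier $uD^t$ — peeling off one factor at a time and using these two moves, exactly as in the proof of the isomorphism $\varphi:GDN(X)\to GDN_{-1}(X)$ in \cite{trees} — every $uD^ts$ of weight $-1$ is obtained from $s$ by a sequence of left/right $\circ$-multiplications by elements of $[D^\omega X]$ of weight $-1$ (inserting the needed generator and adjusting by $D$'s), hence lies in $Id(S)$.

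For the reverse inclusion $Id(S)\subseteq V$, I would argue that $Id(S)$, being the ideal of the Gelfand-Dorfman-Novikov algebra $GDN(X)$ generated by $S$, is spanned by all iterated $\circ$-products in which $s\in S$ occurs once and all other factors are from $X$ (equivalently, from $[D^\omega X]$ of weight $-1$). Using $\overline{u\circ w}=\overline{Du}\cdot w$ and $\overline{w\circ u}=\overline{Dw}\cdot u$ from Lemma~\ref{monomial}(ii), together with the fact proved in Lemma~\ref{monomial}(i) that $D$ and right multiplication by a word are monotone and behave well on leading terms, one shows by induction on the number of $\circ$-operations that every such product is, modulo lower terms and after expanding via $f\circ g=(Df)g$, a $k$-linear combination of $S$-words $uD^ts$ in $k\{X\}$; and every $S$-word appearing necessarily has weight $-1$ because the whole expression lives in the weight $-1$ subspace $GDN_{-1}(X)$ (weight is additive under $\cdot$ and is preserved appropriately, while $D$ raises weight by $1$ and each generator factor contributes $-1$, so the constraint $wt(\overline{uD^ts})=-1$ is forced). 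Hence $Id(S)\subseteq V$.

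The main obstacle I expect is the bookkeeping in the first inclusion: showing that an arbitrary monomial multiplier $uD^t$ acting on $s$ can be synthesized purely from the Gelfand-Dorfman-Novikov operations while staying inside weight $-1$ at every intermediate step. The subtlety is that a naive "multiply by $D^{i+1}a$" is not a single $\circ$-operation, so one must interleave $D$ and multiplication correctly and track the lower-degree error terms that arise from the Leibniz rule $D(fg)=(Df)g+f(Dg)$; controlling these via induction on $|u D^t|_{D\cup X}$ is where the real work lies. Everything else — additivity of $wt$, the leading-term formulas, and the description of $Id(S)$ as spanned by iterated one-$s$ products — is routine given Lemma~\ref{monomial} and the setup from Section~2.
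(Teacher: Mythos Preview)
Your proposal is correct and follows essentially the same route as the paper: for the easy inclusion $Id(S)\subseteq V$ the paper simply observes that $V$ is an ideal containing $S$ (which is your second paragraph made explicit), and for the hard inclusion $V\subseteq Id(S)$ both you and the paper argue by induction that every weight~$-1$ $S$-word $uD^ts$ can be built from $s$ via $\circ$-operations. The only notable difference is organizational: where you propose a single induction on $|uD^t|_{D\cup X}$ and flag the weight-$-1$-at-every-step bookkeeping as the main obstacle, the paper resolves that obstacle cleanly by first writing $uD^ts = c_1[i_1]\cdots c_n[i_n]\,a_1[-1]\cdots a_m[-1]\,(D^ts)\,b_1[-1]\cdots b_t[-1]$ and then proving two separate inductive claims --- (i) $(D^ts)b_1[-1]\cdots b_t[-1]\in Id(S)$ by induction on $t$, and (ii) $c[p]a_1[-1]\cdots a_p[-1]f\in Id(S)$ for $f\in Id(S)$ by induction on $p$ --- each of which is a weight-balanced block, so the intermediate objects automatically stay in $GDN(X)$.
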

\begin{proof}
It is clear that the right part is an ideal that contains $S$. We just need to show that $uD^{t}s \in Id(S)$
whenever $wt(\overline{uD^{t}s})=-1$. Since $wt(\overline{uD^{t}s})=-1$,
we have
$$ uD^{t}s=c_{1}[i_1]\cdots   c_{n}[i_n]a_{1}[-1]\cdots   a_{m}[-1](D^{t}s)b_{1}[-1]\cdots   b_{t}[-1],  $$
where
$u=c_{1}[i_1]\cdots   c_{n}[i_n]a_{1}[-1]\cdots   a_{m}[-1]b_{1}[-1]\cdots   b_{t}[-1]$,
$\ m=i_1+\dots +i_n$ and $ i_n\geq i_{n-1}\geq \dots \geq i_1\geq 0.$
So the lemma will be clear if we show
 \begin{enumerate}
 \item[$(i)$]\   $(D^{t}s)b_{1}[-1]\cdots   b_{t}[-1]\in Id(S)$ whenever $s\in S$;
 \item[$(ii)$]\   $c[p]a_{1}[-1]\cdots   a_{p}[-1]f\in Id(S)$ whenever $f\in Id(S)$.
 \end{enumerate}

 To prove $(i),  $  we use induction on $t$. If $t=0,  $ it is clear.
 Suppose that it holds for all $t\leq n$. Then
\begin{eqnarray*}
&&(D^{n+1}s)b_{1}[-1]\cdots   b_{n+1}[-1]\\
&=&((D^{n}s)b_{1}[-1]\cdots   b_{n}[-1])\circ b_{n+1}[-1]\\
&&-\sum_{1\leq i \leq n}(D^{n}s)b_{1}[-1]\cdots   (Db_{i}[-1])\cdots b_{n}[-1]\cdot b_{n+1}[-1]\\
&=&((D^{n}s)b_{1}[-1]\cdots   b_{n}[-1])\circ b_{n+1}[-1]\\
&&-\sum_{1\leq i \leq n}b_{i}[-1]\circ ((D^{n}s)b_{1}[-1]\cdots   b_{i-1}[-1]b_{i+1}[-1]\cdots    b_{n+1}[-1])\\
&\in & Id(S).
\end{eqnarray*}

To prove $(ii)$,   we use induction on $p$.
If $p=0,  $ it is clear.
 Suppose that it holds for all $p\leq n$. Then
\begin{eqnarray*}
&&c[n+1]a_{1}[-1]\cdots   a_{n+1}[-1]f\\
&=&(c[n]a_{1}[-1]\cdots   a_{n+1}[-1])\circ f\\
&&-\sum_{1\leq i \leq n+1}c[n]a_{1}[-1]\cdots   (Da_{i}[-1])\cdots a_{n+1}[-1]\cdot f\\
&=&(c[n]a_{1}[-1]\cdots   a_{n+1}[-1])\circ f\\
&&-\sum_{1\leq i \leq n+1}c[n]a_{1}[-1]\cdots   a_{i-1}[-1]a_{i+1}[-1]\cdots   a_{n+1}[-1]\cdot (a_{i}[-1]\circ f)\\
&\in & Id(S).
\end{eqnarray*}
So $Id(S)=span_{k}\{uD^{t}s \mid u\in [D^{\omega}X],   t\in \mathbb{N},    s\in S,    wt(\overline{uD^{t}s})=-1\}$.
\end{proof}

Let  $S$ be a subset of $k\{X\}$. We call
$S$ homogeneous (weight homogeneous, $D\cup X$-homogeneous,   resp.),  if for any $f=\sum^{q}_{j=1}\beta_jw_{j}\in S$,   we have
 $|w_{1}|=\dots =|w_{q}| \ (wt(w_{1})=\dots =wt(w_{q}), |w_{1}|_{D\cup X}=\dots =|w_{q}|_{D\cup X},   \mbox{ resp.}) $.
We have the following lemma immediately.

\begin{lemma}\label{homogeneous}
Let $S\subseteq k\{X\}$,   $f=\sum_{i\in I} \beta_{i}u_{i}D^{t_{i}}s_{i}$,   where  each $\beta_{i}\in k,   u_{i}\in [D^{\omega}X],  s_{i}\in S,   t_i\in \mathbb{N}$.
If $f$ and $S$ are homogeneous (weight homogeneous, $D\cup X$-homogeneous,   resp.),   then we can suppose that
$|\overline{u_{i}D^{t_{i}}s_{i}}|=|\overline{f}| \ (wt(\overline{u_{i}D^{t_{i}}s_{i}})=wt(\overline{f}), |\overline{u_{i}D^{t_{i}}s_{i}}|_{D\cup X}=|\overline{f}|_{D\cup X},   \mbox{resp.})$ for any $i\in I$.
\end{lemma}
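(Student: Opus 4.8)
The plan is to notice that each $S$-word $u_iD^{t_i}s_i$ occurring in $f$ is already a homogeneous element in the relevant grading, and then to split $f$ into its homogeneous components and keep only the component whose degree matches that of $\overline{f}$.

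First I would record how the three degree functions behave under the two operations used to form $S$-words. For a word $w=a_n[i_n]\cdots a_1[i_1]$ one has $Dw=\sum_{k=1}^{n}a_n[i_n]\cdots a_k[i_k+1]\cdots a_1[i_1]$, a sum of words each of length $|w|$, of weight $wt(w)+1$, and of $D\cup X$-length $|w|_{D\cup X}+1$; hence $D$ carries a homogeneous element to a homogeneous element, shifting the length by $0$, the weight by $1$, and the $D\cup X$-length by $1$. Iterating, $D^{t}$ preserves homogeneity and shifts the three degrees by $0$, $t$, $t$ respectively. Likewise, for words $u,w$ we have $|uw|=|u|+|w|$, $wt(uw)=wt(u)+wt(w)$ and $|uw|_{D\cup X}=|u|_{D\cup X}+|w|_{D\cup X}$, so left multiplication by a fixed monomial $u$ carries a homogeneous element to a homogeneous element, shifting each degree additively by the corresponding degree of $u$.

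Then, fixing one of the three gradings, assuming $f\neq 0$ and $S$ homogeneous in that grading, I would write $g_i:=u_iD^{t_i}s_i$. By the previous paragraph each $g_i$ is homogeneous, of some degree $d_i$; concretely, in the length case $d_i=|u_i|+|s_i|$, in the weight case $d_i=wt(u_i)+wt(s_i)+t_i$, and in the $D\cup X$ case $d_i=|u_i|_{D\cup X}+|s_i|_{D\cup X}+t_i$. In particular every word of $g_i$ with nonzero coefficient, so also the leading word $\overline{g_i}$, has degree $d_i$, even though we never need to identify $\overline{g_i}$ explicitly. Grouping the index set by this value, put $f_d:=\sum_{\,i:\,d_i=d}\beta_i g_i$, which is either $0$ or homogeneous of degree $d$, and $f=\sum_d f_d$. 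Since $f$ is homogeneous, all words occurring in it share one common degree $d_0$, and $d_0$ is precisely $|\overline{f}|$ (resp. $wt(\overline{f})$, $|\overline{f}|_{D\cup X}$). Because monomials of $k\{X\}$ of distinct degrees are distinct elements of the basis $[D^{\omega}X]$, comparing both sides of $f=\sum_d f_d$ forces $f_d=0$ for every $d\neq d_0$, hence $f=f_{d_0}=\sum_{\,i:\,d_i=d_0}\beta_i u_iD^{t_i}s_i$. For every surviving index $i$ we then have $|\overline{u_iD^{t_i}s_i}|=d_0=|\overline{f}|$ (resp. the weight or the $D\cup X$-length identity), which is the assertion.

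I do not expect a genuine obstacle here; the only points needing a little care are that the leading word of a homogeneous element inherits the degree of that element (so the degree of $\overline{g_i}$ is pinned down without computing it), and that words of $k\{X\}$ of different degrees are linearly independent — both immediate from the explicit $k$-basis $[D^{\omega}X]$ of $k\{X\}$ described above.
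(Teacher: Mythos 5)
Your argument is correct, and it is essentially the verification the paper leaves implicit: the paper states this lemma with no proof beyond ``We have the following lemma immediately,'' and your decomposition of $f$ into graded components (using that $D$ and multiplication by a monomial preserve homogeneity and shift the three degrees by $0$/$t$/$t$ and additively, together with linear independence of basis words of distinct degrees) is exactly the routine check being taken for granted. No gaps.
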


\subsection{Composition-Diamond lemma for commutative differential algebras}


The idea of this subsection is essentially the same as the construction of standard differential Gr\"{o}bner bases
in  \cite{Ferro,Ollivier},   in which the authors deal with more general case with several derivations.

For any $u,  v\in [D^{\omega}X]$,   we always denote $lcm(u,  v)$ the least common multiple of $u,  v$ in $[D^{\omega}X]$. We call $lcm(u,  v)$ a non-trivial least common multiple of $u$ and $v$ if $|lcm(u,  v)|<|uv|$.

For any $f,  g\in S\subseteq k\{X\}$,    if $w =lcm(\overline{D^{t_1}f},  \overline{D^{t_2}g})$
is a non-trivial least common multiple of $\overline{D^{t_1}f}$ and $\overline{D^{t_2}g}$,   then we call
$$ [D^{t_1}f,  D^{t_2}g]_{w}= \frac{1}{\alpha_{1}}w|_{_{\overline{D^{t_{1}}f}\mapsto D^{t_{1}}f}}-\frac{1}{\alpha_{2}}w|_{_{\overline{D^{t_{2}}g}\mapsto D^{t_{2}}g}}$$
a composition for $D^{t_1}f \wedge D^{t_2}g$ corresponding to $w$,   where $\alpha_1=LC(D^{t_1}f),  \alpha_2=LC(D^{t_2}g) $.

For a polynomial $h\in k\{X\}$,   we say $h\equiv0 \ mod(S,  w)$ if $h=\sum\gamma_iu_{i}D^{t_i}s_i$,   where each
$\gamma_i\in k$,    $ u_{i}D^{t_i}s_i$ is an $S$-word and $\overline{u_{i}D^{t_i}s_i}<w$. Denote $h\equiv h' \ mod(S,  w)$ if $h- h'\equiv 0 \ mod(S,  w)$.
The composition $[D^{t_1}f,  D^{t_2}g]_{w}$ is trivial $mod(S,  w)$ if
$[D^{t_1}f,  D^{t_2}g]_{w}\equiv0\  mod(S,  w)$.

For $f,  g\in S,   w=lcm(\overline{D^{t_{1}}f},   \overline{D^{t_{2}}g})$,   if $w=\overline{f}$ or $w=\overline{g}$,   then the composition is called inclusion; Otherwise,   the composition is called intersection.

\begin{definition}
Let  $S$ be a
non-empty subset of $k\{X\}$. Then the set $S$ is called a Gr\"{o}bner-Shirshov basis in
$k\{X\}$ if all compositions of $S$ in $k\{X\}$ are trivial.
\end{definition}



\begin{theorem}\label{differential cd lemma} (Composition-Diamond lemma for commutative differential algebras) \cite{Ferro,   Ollivier}
Let $<$ be the monomial order on $k\{X\}$ as before and  $S$  a
non-empty subset of $k\{X\}$. Let $Id[S]$ be the ideal of $k\{X\}$
generated by $S$. Then the following statements are equivalent.
\begin{enumerate}
\item[(i)] $S$ is a Gr\"{o}bner-Shirshov basis in $k\{X\}$.
\item[(ii)] $0\neq h\in{Id[S]}\Rightarrow\overline{h}=u\overline{D^{t}s}$ for
some $s\in S,   u\in [D^{\omega}X],  t\in \mathbb{N}$.
\item[(iii)]$Irr[S]=\{w \in [D^{\omega}X] \mid w\neq u\overline{D^{t}s},   \ u\in [D^{\omega}X],   t\in \mathbb{N},   s\in S\} $ is a linear basis for $k\{X|S\}$.
\end{enumerate}
\end{theorem}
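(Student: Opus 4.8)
The plan is to prove the cycle of implications $(i)\Rightarrow(ii)\Rightarrow(iii)\Rightarrow(i)$ by the standard template for Composition--Diamond lemmas, the only new feature being that the single derivation $D$ is built into the combinatorics through the $S$-words $uD^{t}s$. I would first record two facts, both consequences of Lemma~\ref{monomial}. First, for every $S$-word one has $\overline{uD^{t}s}=u\,\overline{D^{t}s}$ and $LC(uD^{t}s)=LC(s)$: multiplying by a monomial and applying $D$ (iterated) both preserve the order and send leading monomial to leading monomial, with $LC(D^{t}w)=1$ for a monomial $w$. Second, a ``trivial composition lemma'': if $w:=lcm(\overline{D^{t_1}s_1},\overline{D^{t_2}s_2})$ is a trivial least common multiple but $u_1\overline{D^{t_1}s_1}=u_2\overline{D^{t_2}s_2}=:w_1$, then $\frac{1}{\alpha_1}u_1D^{t_1}s_1-\frac{1}{\alpha_2}u_2D^{t_2}s_2\equiv0\ mod(S,w_1)$; this I would verify by writing $D^{t_i}s_i=\alpha_i\overline{D^{t_i}s_i}+r_i$ with $\overline{r_i}<\overline{D^{t_i}s_i}$, observing that $u_1=\overline{D^{t_2}s_2}\,v$ and $u_2=\overline{D^{t_1}s_1}\,v$ for a common monomial $v$, simplifying the inner difference to $\frac{1}{\alpha_1\alpha_2}(r_1D^{t_2}s_2-r_2D^{t_1}s_1)$ (the two $r_1r_2$ cross terms cancel), and then expanding $r_i$ into monomials so that Lemma~\ref{monomial}(i) exhibits a $k$-combination of $S$-words with leading monomials below $w$, hence below $w_1$ after multiplying by $v$.

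For $(i)\Rightarrow(ii)$, I take $0\neq h\in Id[S]$; using $Id[S]=span_k\{uD^{t}s\}$, fix a representation $h=\sum_{i}\gamma_{i}u_{i}D^{t_{i}}s_{i}$ for which $w_1:=\max_i\overline{u_iD^{t_i}s_i}$ is smallest in the well-order and, subject to that, the number $m$ of indices attaining $w_1$ is smallest; we may take the $S$-words pairwise distinct. The claim is $w_1=\overline{h}$. Otherwise $w_1>\overline{h}$, so the $w_1$-coefficient of $h$ vanishes; since each of the $m$ leading $S$-words contributes $\gamma_iLC(s_i)\neq0$ to it, $m\geq2$. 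Picking two such indices, $u_1\overline{D^{t_1}s_1}=u_2\overline{D^{t_2}s_2}=w_1$ is a common multiple of $\overline{D^{t_1}s_1},\overline{D^{t_2}s_2}$, so $w_1=vw$ with $w=lcm(\overline{D^{t_1}s_1},\overline{D^{t_2}s_2})$. If $w$ is a non-trivial least common multiple, hypothesis (i) gives $[D^{t_1}s_1,D^{t_2}s_2]_w\equiv0\ mod(S,w)$, and since $\frac{1}{\alpha_1}u_1D^{t_1}s_1-\frac{1}{\alpha_2}u_2D^{t_2}s_2=v\,[D^{t_1}s_1,D^{t_2}s_2]_w$, Lemma~\ref{monomial}(i) makes this difference a $k$-combination of $S$-words with leading monomials $<w_1$; if $w$ is trivial, the second fact above does the same. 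Substituting the resulting identity for $u_1D^{t_1}s_1$ into $h$ turns $\gamma_1u_1D^{t_1}s_1+\gamma_2u_2D^{t_2}s_2$ into a single scalar multiple of $u_2D^{t_2}s_2$ plus $S$-words with leading monomials $<w_1$, giving a representation of $h$ whose maximum is still $\leq w_1$ but with strictly smaller $m$ (or with strictly smaller maximum), contradicting minimality. Hence $\overline{h}=w_1=u_1\overline{D^{t_1}s_1}$, which is (ii).

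For $(ii)\Rightarrow(iii)$, spanning follows by well-founded induction on $\overline{f}$: if $\overline{f}=u\overline{D^{t}s}$, then $f-\frac{LC(f)}{LC(s)}uD^{t}s$ has strictly smaller leading monomial and is congruent to $f$ modulo $Id[S]$; if $\overline{f}\in Irr[S]$, one removes $LC(f)\overline{f}$ and recurses; so every element of $k\{X|S\}$ is a $k$-combination of elements of $Irr[S]$. Linear independence is immediate: a nonzero such combination lying in $Id[S]$ would be a nonzero $h\in Id[S]$ whose leading monomial is in $Irr[S]$ and so has no factorization $u\overline{D^{t}s}$, contradicting (ii). For $(iii)\Rightarrow(i)$, let $h=[D^{t_1}f,D^{t_2}g]_w$ be any composition; then $h\in Id[S]$ and $\overline{h}<w$, and the reduction just described rewrites $h$ modulo $Id[S]$ as a $k$-combination $h_1$ of elements of $Irr[S]$, using only $S$-words with leading monomial $\leq\overline{h}<w$. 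Since $h_1\in Id[S]$ and $Irr[S]$ is linearly independent modulo $Id[S]$ by (iii), we get $h_1=0$, i.e.\ $h\equiv0\ mod(S,w)$; so $S$ is a Gr\"{o}bner-Shirshov basis.

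I expect $(i)\Rightarrow(ii)$ to be the main obstacle, since it is the only implication that truly uses the hypothesis and it simultaneously requires: the correct interaction of the derivation with the monomial order, so that leading monomials of $S$-words multiply predictably (this is Lemma~\ref{monomial}, but must be applied carefully, including for iterated $D$); the trivial-composition bookkeeping with its $r_1r_2$ cancellation (needed precisely because the definition of a Gr\"{o}bner-Shirshov basis only demands checking non-trivial least common multiples); and a clean descent on the pair $(w_1,m)$. Once $(i)\Rightarrow(ii)$ is established the remaining implications are routine; for the version with several derivations one may also cite \cite{Ferro,Ollivier}.
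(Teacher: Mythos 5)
The paper does not actually prove this theorem --- it is quoted from \cite{Ferro,Ollivier} --- but your argument is the standard Composition--Diamond template and coincides step for step with the paper's own proof of the analogous Theorem~\ref{Novikov cd lemma} for $GDN(X)$, including your ``trivial composition lemma,'' which is exactly the paper's Lemma~\ref{novikov key lemma} with the same $r_1r_2$-cancellation, and the descent on the pair $(w_1,l)$. The one genuine error is the auxiliary claim that $LC(D^{t}w)=1$ for a monomial $w$, hence $LC(uD^{t}s)=LC(s)$: this fails for monomials with repeated factors, e.g.\ $D(a[0]a[0])=2a[1]a[0]$, so in general $LC(D^{t}s)$ differs from $LC(s)$ by a positive integer. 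This does not damage the proof, because your actual normalizations (and the paper's definition of composition) use $\alpha_i=LC(D^{t_i}s_i)$, and in characteristic $0$ these are still nonzero, so the count $m\geq 2$ and the descent go through unchanged; you should simply replace $\gamma_iLC(s_i)$ by $\gamma_iLC(D^{t_i}s_i)$ and drop the false side remark.
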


\ \

\noindent{\bf Buchberger-Shirshov algorithm:} If a subset  $S\subset
k\{X\}$ is not a Gr\"{o}bner-Shirshov basis then
one can add all non-trivial compositions of $S$ to
$S$. Continuing this process repeatedly,   we finally obtain a
Gr\"{o}bner-Shirshov basis $S^{c}$ that contains $S$. Such a process
is called Buchberger-Shirshov algorithm.

\subsection{Composition-Diamond lemma for Gelfand-Dorfman-Novikov algebras}

For any $u,  v,   w \in [D^{\omega}X]$,   we call $w$ a common multiple of $u$ and $v$ in $ GDN(X)$ if
$wt(w)=-1$ and $w$ is a common multiple of $u$ and $v$ in $[D^{\omega}X]$; $w$ is a non-trivial common multiple of $u$
and $v$ in $ GDN(X)$ if $w$ is a common multiple of $u$ and $v$ in $ GDN(X)$ such that $w\neq uvw'$ for any $w'\in [D^{\omega}X]$.

Let $f,  g\in GDN(X)$ and $w$ a non-trivial common multiple of $\overline{D^{t_{1}}f}$ and $\overline{D^{t_{2}}g}$ in $GDN(X)$. Then
a composition of $D^{t_{1}}f\wedge D^{t_{2}}g$ relative to $w$ is defined as
$$
(D^{t_1}f,  D^{t_2}g)_{w}= \frac{1}{\alpha_{1}}w|_{_{\overline{D^{t_{1}}f}\mapsto D^{t_{1}}f}}-\frac{1}{\alpha_{2}}w|_{_{\overline{D^{t_{2}}g}\mapsto D^{t_{2}}g}},
$$
where $\alpha_1=LC(D^{t_1}f)$ and $\alpha_2=LC(D^{t_2}g)$.

Suppose that $S\subseteq GDN(X)$ and $h\in GDN(X)$. Then we say $h\equiv0 \
 mod(S,  w)$ if $h=\sum\beta_iu_{i}D^{t_i}s_i$,   where each $\beta_i\in k,  \
u_{i}D^{t_i}s_i$ is an $S$-word such that $wt(\overline{u_{i}D^{t_i}s_i})=-1$ and $\overline{u_{i}D^{t_i}s_i}<w$.
The composition $(D^{t_1}f,  D^{t_2}g)_{w}$ is trivial $ mod(S,  w)$ if
$(D^{t_1}f,  D^{t_2}g)_{w}\equiv0\ \  mod(S,  w)$.

Let
$$
w=lcm(\overline{D^{t_1}f},  \overline{D^{t_2}g})d_{1}[m_1]\cdots   d_{p}[m_p]c_{1}[-1]\cdots   c_{q}[-1]
$$
be a non-trivial common multiple of $\overline{D^{t_{1}}f}$ and $\overline{D^{t_{2}}g}$ in $ GDN(X)$,   where $ d_1[m_1]\ge \dots \ge d_p[m_p], m_p> 0, c_1\ge \dots \ge c_q$. Then we say $w$
to be critical if one of the following holds:
\begin{enumerate}
\item[(i)] If $wt(lcm(\overline{D^{t_1}f},  \overline{D^{t_2}g})>-1$,   then $d_{1}[m_1]\cdots   d_{p}[m_p]$ is empty.
\item[(ii)] If $wt(lcm(\overline{D^{t_1}f},  \overline{D^{t_2}g})=-1$,   then $d_{1}[m_1]\cdots   d_{p}[m_p]c_{1}[-1]\cdots   c_{q}[-1]$ is empty.
\item[(iii)] If $wt(lcm(\overline{D^{t_1}f},  \overline{D^{t_2}g}))<-1$,   then $wt(lcm(\overline{D^{t_1}f},  \overline{D^{t_2}g})d_{1}[m_1]\cdots   d_{p-1}[m_{p-1}])<-1$
and $wt(lcm(\overline{D^{t_1}f},  \overline{D^{t_2}g})d_{1}[m_1]\cdots   d_{p}[m_p])\geq -1$.
\end{enumerate}

\begin{definition}
Let $S$ be a
non-empty subset of $GDN(X)$. Then the set $S$ is called a Gr\"{o}bner-Shirshov basis in
$GDN(X)$ if all compositions of $S$ in $GDN(X)$ are trivial.
\end{definition}

\begin{lemma}
Suppose that the composition
$(D^{t_1}f,  D^{t_2}g)_{w}$ is trivial for every critical common multiple $w$ of $\overline{D^{t_{1}}f}$ and $\overline{D^{t_2}g}$,
where $f,  g\in S$,   $t_1,  t_2\in \mathbb{N}$. Then $S$ is a Gr\"{o}bner-Shirshov basis in $GDN(X)$.
\end{lemma}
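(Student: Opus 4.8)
The plan is to show that triviality of compositions for all \emph{critical} common multiples already forces triviality of compositions for \emph{every} non-trivial common multiple $w$ of $\overline{D^{t_1}f}$ and $\overline{D^{t_2}g}$ in $GDN(X)$, which is exactly what the definition of Gr\"obner-Shirshov basis in $GDN(X)$ demands. So fix $f,g\in S$, $t_1,t_2\in\mathbb{N}$, and an arbitrary non-trivial common multiple
$$
w=lcm(\overline{D^{t_1}f},\overline{D^{t_2}g})\,d_{1}[m_1]\cdots d_{p}[m_p]\,c_{1}[-1]\cdots c_{q}[-1],
$$
written in the normalized form of the excerpt with $d_1[m_1]\ge\dots\ge d_p[m_p]$, $m_p>0$, $c_1\ge\dots\ge c_q$. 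I would argue that $w$ factors as $w=w_{0}\cdot v$, where $w_0$ is a \emph{critical} common multiple of the same pair (obtained by truncating the tail $d_i[m_i]$'s and $c_j[-1]$'s according to the three cases in the definition) and $v\in[D^{\omega}X]$ is the remaining monomial; crucially the weight bookkeeping guarantees that $wt(w_0)=-1$ in cases (i)--(ii) and $wt(w_0)\in\{-1\}$ after the minimal prefix of $d$'s that pushes the weight up to $-1$ in case (iii), so $w_0$ really is an $S$-word-admissible leading monomial.

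The second step is to relate $(D^{t_1}f,D^{t_2}g)_{w}$ to $(D^{t_1}f,D^{t_2}g)_{w_0}$. Since
$w|_{\overline{D^{t_i}\cdot}\mapsto D^{t_i}\cdot}$ is obtained from $w_0|_{\overline{D^{t_i}\cdot}\mapsto D^{t_i}\cdot}$ by multiplying by the same monomial $v$ and reducing via the $\circ$-product (recall $f\circ g=(Df)g$, so ordinary multiplication by a generator $a[-1]$ is realized inside $GDN(X)$ up to lower terms, as in the proof of Lemma~\ref{Novikov ideal}), I would show
$$
(D^{t_1}f,D^{t_2}g)_{w}\ \equiv\ \big(\text{an }S\text{-word combination built from }(D^{t_1}f,D^{t_2}g)_{w_0}\big)\ \ mod(S,w).
$$
Concretely: multiplying the identity $(D^{t_1}f,D^{t_2}g)_{w_0}\equiv 0\ mod(S,w_0)$ by $v$ and repeatedly applying the two reduction moves from the proof of Lemma~\ref{Novikov ideal} (namely $c[n{+}1]a_1[-1]\cdots a_{n{+}1}[-1]h = (c[n]a_1[-1]\cdots a_{n{+}1}[-1])\circ h-\sum_i c[n]a_1[-1]\cdots\widehat{a_i}\cdots a_{n{+}1}[-1]\cdot(a_i[-1]\circ h)$, and the analogous move absorbing a $b[-1]$ factor) shows that $v\cdot(D^{t_1}f,D^{t_2}g)_{w_0}$ lies in the span of $S$-words with leading monomial $<w$, i.e.\ $\equiv 0\ mod(S,w)$. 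On the other hand $v\cdot(D^{t_1}f,D^{t_2}g)_{w_0}$ differs from $(D^{t_1}f,D^{t_2}g)_{w}$ only by $S$-words strictly below $w$, because passing from $w_0$ to $w$ multiplies the two "plugged-in" terms $w|_{\overline{D^{t_i}\cdot}\mapsto D^{t_i}\cdot}$ by the same $v$ up to $<w$ corrections (here Lemma~\ref{monomial} controls that all the error terms produced by the Leibniz rule have strictly smaller order). Combining, $(D^{t_1}f,D^{t_2}g)_{w}\equiv 0\ mod(S,w)$.

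I expect the main obstacle to be the second step's careful monomial-order bookkeeping: one must verify that every term produced when "transporting" the critical composition by $v$ — both the Leibniz error terms from differentiating the $c_j[-1]$ and $d_i[m_i]$ factors, and the discrepancy between $v\cdot(w_0|_{\cdots})$ and $w|_{\cdots}$ — has leading word strictly less than $w$ in the order of Section~3.1. This is where part (i) of Lemma~\ref{monomial} (multiplicativity and $\overline{Du}<\overline{Dv}$) and part (ii) (the $\circ$-product respects the order) do the real work, and where the precise choice of the normalized factorization $w=w_0\cdot v$, together with the three-case definition of "critical", has to be matched exactly so that no term of order $\ge w$ survives. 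A secondary, more bureaucratic point is checking that the truncation producing $w_0$ always lands on a genuinely critical multiple in each of the three weight regimes — in particular that in case (iii) the minimal prefix of $d$'s bringing the weight from $<-1$ up to $\ge -1$ indeed hits $-1$ and not something larger, which uses $m_p>0$ and the ordering $m_1\ge\dots\ge m_p$; once that is in place, the rest is the reduction argument already rehearsed in Lemma~\ref{Novikov ideal}.
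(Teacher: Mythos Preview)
Your overall strategy is exactly the paper's: factor an arbitrary non-trivial common multiple as $w=v\cdot w_{0}$ with $w_{0}$ critical, then transport the triviality at $w_{0}$ to $w$. The paper's proof is literally the one-sentence observation that every non-trivial common multiple in $GDN(X)$ contains a critical one as a factor.

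Where you diverge is in the second step, which you have substantially overcomplicated. The identity
\[
(D^{t_1}f,D^{t_2}g)_{w}\;=\;v\cdot (D^{t_1}f,D^{t_2}g)_{w_0}
\]
holds \emph{exactly} in $k\{X\}$, not merely modulo lower terms: by definition $w|_{\overline{D^{t_i}f}\mapsto D^{t_i}f}=(w/\overline{D^{t_i}f})\cdot D^{t_i}f$ in the commutative algebra $k[D^{\omega}X]$, and this equals $v\cdot\bigl((w_{0}/\overline{D^{t_i}f})\cdot D^{t_i}f\bigr)$. No Leibniz error terms arise, because nothing is being differentiated when passing from $w_{0}$ to $w$; you are simply multiplying by a monomial. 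Consequently there is no need for the $\circ$-product gymnastics or the reduction moves from Lemma~\ref{Novikov ideal}. If $(D^{t_1}f,D^{t_2}g)_{w_0}=\sum_i\beta_i u_iD^{t_i}s_i$ with each $\overline{u_iD^{t_i}s_i}<w_0$ and $wt(\overline{u_iD^{t_i}s_i})=-1$, then multiplying by $v$ gives $\sum_i\beta_i(vu_i)D^{t_i}s_i$. Since $wt(v)=wt(w)-wt(w_0)=0$, each $(vu_i)D^{t_i}s_i$ is again an $S$-word in $GDN(X)$ \emph{by the definition in Section~3.2} (an $S$-word is any $uD^{t}s$ with $wt(\overline{uD^{t}s})=-1$; it need not be expressed via $\circ$), and $\overline{(vu_i)D^{t_i}s_i}=v\cdot\overline{u_iD^{t_i}s_i}<v\cdot w_0=w$ by Lemma~3.1(i). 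That is the whole argument.

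A minor point: your worry in case~(iii) that the $d$-prefix must hit weight exactly $-1$ is a misreading of the definition. The clause only requires $wt(lcm\cdot d_1[m_1]\cdots d_p[m_p])\geq -1$; the trailing $c_j[-1]$'s then adjust the total down to $-1$, and since $w$ has at least as many $c_j[-1]$'s as $w_0$ needs (because $wt(v)=0$ forces the extra $d$'s and $c$'s in $w$ to balance), the critical $w_0$ always divides $w$.
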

\begin{proof}
Noting that any common multiple of $\overline{D^{t_{1}}f}$ and $ \overline{D^{t_{2}}g}$ in $GDN(X)$ contains some critical common multiple $w$ of $\overline{D^{t_{1}}f}$ and $ \overline{D^{t_{2}}g}$,   the result follows.
\end{proof}

\begin{lemma}\label{novikov key lemma}
Suppose that $S$ is a Gr\"{o}bner-Shirshov basis in $GDN(X)$,   $f,  g\in S$ and $w=\overline{uD^{t}f}=\overline{vD^{t'}g}\in  [D^{\omega}X]$, $wt(w)=-1$.
Then $\frac{1}{\alpha_{1}}uD^{t}f-\frac{1}{\alpha_{2}}vD^{t'}g\equiv 0 \  mod(S,  w),  $ where $\alpha_1=LC(uD^{t}f)$ and $\alpha_2=LC(vD^{t'}g)$.
\end{lemma}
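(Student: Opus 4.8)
The plan is to prove the statement by a case analysis on whether $w$ --- which is certainly a common multiple of $\overline{D^{t}f}$ and $\overline{D^{t'}g}$ in $GDN(X)$, since $w=u\,\overline{D^{t}f}=v\,\overline{D^{t'}g}$ and $wt(w)=-1$ --- is a trivial or a non-trivial common multiple. This is the dichotomy I would use, since the composition $(D^{t}f,D^{t'}g)_{w}$ is defined precisely when $w$ is non-trivial; note it is \emph{not} the same as asking whether $lcm(\overline{D^{t}f},\overline{D^{t'}g})$ is trivial. As preliminaries: since multiplication by a monomial preserves the order, $w=u\,\overline{D^{t}f}=v\,\overline{D^{t'}g}$ with $\alpha_{1}=LC(D^{t}f)$ and $\alpha_{2}=LC(D^{t'}g)$; and since $GDN(X)=GDN_{-1}(X)$ is spanned by weight-$(-1)$ monomials while $D$ raises weight by $1$, every monomial of $D^{t}f$ has weight $t-1$ and every monomial of $D^{t'}g$ has weight $t'-1$. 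Write $D^{t}f=\alpha_{1}\overline{D^{t}f}+r_{1}$ and $D^{t'}g=\alpha_{2}\overline{D^{t'}g}+r_{2}$, where every monomial of $r_{1}$ is below $\overline{D^{t}f}$ and every monomial of $r_{2}$ is below $\overline{D^{t'}g}$.

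Suppose first that $w$ is a non-trivial common multiple. By uniqueness of division in the free commutative monoid $[D^{\omega}X]$ we have $w|_{\overline{D^{t}f}\mapsto D^{t}f}=uD^{t}f$ and $w|_{\overline{D^{t'}g}\mapsto D^{t'}g}=vD^{t'}g$, so $\frac{1}{\alpha_{1}}uD^{t}f-\frac{1}{\alpha_{2}}vD^{t'}g$ is exactly the composition $(D^{t}f,D^{t'}g)_{w}$. Since $S$ is a Gr\"{o}bner-Shirshov basis in $GDN(X)$, all compositions of $S$ in $GDN(X)$ are trivial by definition, hence this difference is $\equiv 0\ mod(S,w)$.

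Now suppose $w$ is a trivial common multiple, so $w=\overline{D^{t}f}\,\overline{D^{t'}g}\,w'$ for some $w'\in[D^{\omega}X]$; by cancellation $u=\overline{D^{t'}g}\,w'$ and $v=\overline{D^{t}f}\,w'$. Substituting $\alpha_{1}\overline{D^{t}f}=D^{t}f-r_{1}$ and $\alpha_{2}\overline{D^{t'}g}=D^{t'}g-r_{2}$ and using commutativity of the product to cancel the common top term $D^{t}f\cdot D^{t'}g$, I would obtain
$$\frac{1}{\alpha_{1}}uD^{t}f-\frac{1}{\alpha_{2}}vD^{t'}g=\frac{w'}{\alpha_{1}\alpha_{2}}\bigl(\alpha_{2}\,\overline{D^{t'}g}\cdot D^{t}f-\alpha_{1}\,\overline{D^{t}f}\cdot D^{t'}g\bigr)=\frac{1}{\alpha_{1}\alpha_{2}}\bigl(w'r_{1}\,D^{t'}g-w'r_{2}\,D^{t}f\bigr).$$
Expanding $r_{1},r_{2}$ into monomials presents the right-hand side as a $k$-combination of $S$-words of the form $(w'v_{p})D^{t'}g$ and $(w'u_{q})D^{t}f$; since multiplication by a monomial preserves the order, the leading word of $(w'v_{p})D^{t'}g$ is $w'v_{p}\,\overline{D^{t'}g}<w'\,\overline{D^{t}f}\,\overline{D^{t'}g}=w$ (and symmetrically for $(w'u_{q})D^{t}f$), while by the weight-homogeneity recorded above each such leading word has weight $wt(w)=-1$. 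Hence $\frac{1}{\alpha_{1}}uD^{t}f-\frac{1}{\alpha_{2}}vD^{t'}g\equiv 0\ mod(S,w)$, which finishes the argument.

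I expect the only real care is needed in two places. First, one must use the dichotomy ``trivial versus non-trivial common multiple $w$'' and not ``trivial versus non-trivial $lcm$'': these genuinely differ, and it is the former that matches the domain of definition of the composition. Second, in the trivial-common-multiple case one has to push the derivations $D^{t},D^{t'}$ through the computation and invoke the weight-homogeneity of $GDN(X)$ with enough attention to certify that each $S$-word that appears has leading word strictly below $w$ \emph{and} weight exactly $-1$, since both conditions are required for a term to count toward $\equiv 0\ mod(S,w)$; everything else is routine bookkeeping.
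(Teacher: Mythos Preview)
Your proof is correct and follows essentially the same approach as the paper: the same trivial/non-trivial common multiple dichotomy, and in the trivial case the same add-and-subtract manipulation yielding $\frac{1}{\alpha_{1}\alpha_{2}}(w'r_{1}\,D^{t'}g - w'r_{2}\,D^{t}f)$ (the paper writes this with $u'$ in place of your $w'$). If anything, you are slightly more explicit than the paper in verifying the weight-$(-1)$ condition on the resulting $S$-words, which the paper leaves implicit.
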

\begin{proof}
If $u=u'\overline{D^{t'}g}$ for some $u' \in [D^{\omega}X]$,   then $v=u'\overline{D^{t}f}$. Thus
\begin{eqnarray*}
&&\frac{1}{\alpha_{1}}uD^{t}f-\frac{1}{\alpha_{2}}vD^{t'}g\\
&=&\frac{1}{\alpha_{1}}u'(\overline{D^{t'}g})D^{t}f-\frac{1}{\alpha_{1}\alpha_{2}}u'(D^{t}f)D^{t'}g+
\frac{1}{\alpha_{1}\alpha_{2}}u'(D^{t}f)D^{t'}g-\frac{1}{\alpha_{2}}u'(\overline{D^{t}f})D^{t'}g\\
&=&\frac{1}{\alpha_{1}}(\overline{D^{t'}g}-\frac{1}{\alpha_{2}}D^{t'}g)u'D^{t}f-
\frac{1}{\alpha_{2}}(\overline{D^{t}f}-\frac{1}{\alpha_{1}}D^{t}f)u'D^{t'}g\\
&\equiv&0 \ mod(S,  w).
\end{eqnarray*}

Otherwise, $w$ is a non-trivial common multiple of $\overline{D^{t}f}$ and $\overline{D^{t'}g}$ in $GDN(X)$.
Since $S$ is a Gr\"{o}bner-Shirshov basis, by definition we have
$\frac{1}{\alpha_{1}}uD^{t}f-\frac{1}{\alpha_{2}}vD^{t'}g\equiv 0 \  mod(S,  w).  $
\end{proof}

\begin{lemma}\label{novikov two part}
Let $S$ be a non-empty subset of $GDN(X)$. Denote
$$Irr(S)=\{w\in [D^{\omega}X] \mid w\neq \overline{uD^{t}s},    u\in [D^{\omega}X],   t\in \mathbb{N},   s\in S,   wt(w)=-1 \}.$$
Then for all $h\in GDN(X)$,   we have
$$h=\sum_{u_{i}\overline{D^{t_{i}}s_{i}}\leq \overline{h}}\beta_{i}u_{i}D^{t_{i}}s_{i}+\sum_{w_{j}\leq \overline{h}}\gamma_{j}w_{j},  $$
where each $\beta_{i},  \gamma_{j}\in k,  \ u_{i}\in [D^{\omega}X],  \ t_{i}\in \mathbb{N},  \ s_{i}\in S,   \ w_{j}\in Irr(S),  $ and
$wt( u_{i}\overline{D^{t_{i}}s_{i}})=wt(w_{j})=-1.$
\end{lemma}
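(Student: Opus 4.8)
\noindent The plan is to argue by Noetherian induction on the leading word $\overline{h}\in[D^{\omega}X]$ with respect to the well-order $<$, with base case $h=0$ (take both sums empty). Note first that since $h\in GDN(X)=GDN_{-1}(X)$, every monomial occurring in $h$ has weight $-1$; in particular $wt(\overline{h})=-1$ whenever $h\neq 0$, which already accounts for the weight condition on all words that will appear.

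So suppose $h\neq 0$, put $w=\overline{h}$ and $\alpha=LC(h)$, and distinguish two cases according to whether $w\in Irr(S)$. If $w\in Irr(S)$, I would set $h_{1}=h-\alpha w$; since $wt(w)=-1$ we still have $h_{1}\in GDN(X)$, and either $h_{1}=0$ or $\overline{h_{1}}<w$, so the induction hypothesis applies to $h_{1}$ and writes it in the required form with every word $\le\overline{h_{1}}<w$. Adding back the term $\alpha w$, which is an element of $Irr(S)$ with $w\le w$ and $wt(w)=-1$, gives the asserted expression for $h$.

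If $w\notin Irr(S)$, then by the definition of $Irr(S)$ there are $u\in[D^{\omega}X]$, $t\in\mathbb{N}$, $s\in S$ with $w=\overline{uD^{t}s}$; automatically $wt(w)=-1$, so $uD^{t}s$ is an $S$-word in $GDN(X)$, and by Lemma \ref{Novikov ideal} it lies in $Id(S)\subseteq GDN(X)$. Writing $\beta=LC(uD^{t}s)\neq 0$ and $h_{1}=h-\tfrac{\alpha}{\beta}uD^{t}s\in GDN(X)$, the leading terms cancel, so $h_{1}=0$ or $\overline{h_{1}}<w$; the induction hypothesis handles $h_{1}$, and adding back $\tfrac{\alpha}{\beta}uD^{t}s$, an $S$-word with leading word $w\le w$ of weight $-1$, finishes the induction. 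Since $<$ is a well-order this descent terminates after finitely many steps, so the induction is legitimate.

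This is a standard Gröbner--Shirshov style rewriting/reduction argument, so I do not expect a genuine obstacle; the only points that need a little care are that each reduction step keeps us inside $GDN(X)$ and that the $S$-word peeled off at each step has leading word of weight $-1$, and both are provided precisely by Lemma \ref{Novikov ideal} together with the weight bookkeeping above. One should also check the trivial bookkeeping that, when the inductive expression for $h_{1}$ is combined with the single extra term, the index sets merge without violating the bound $\le\overline{h}$; this is immediate, since every word coming from $h_{1}$ is $<w$ and the added word equals $w$.
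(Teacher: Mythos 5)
Your proposal is correct and is exactly the argument the paper intends: the paper's proof is the one-line ``By induction on $\overline{h}$,'' and your case split (peel off the leading term if $\overline{h}\in Irr(S)$, otherwise peel off the normalized $S$-word with the same leading word, then apply the induction hypothesis to the strictly smaller remainder) is the standard expansion of that induction. The extra care you take with the weight bookkeeping via Lemma \ref{Novikov ideal} is exactly the right justification for staying inside $GDN(X)$.
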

\begin{proof}
By induction on $\overline h$,   we have the result.
\end{proof}

\begin{theorem}\label{Novikov cd lemma} (Composition-Diamond lemma for Gelfand-Dorfman-Novikov algebras)
Let $S$ be  a
non-empty subset of $GDN(X)$ and $Id(S)$ be the ideal of $GDN(X)$
generated by $S$. Then the following statements are equivalent.
\begin{enumerate}
\item[(i)] $S$ is a Gr\"{o}bner-Shirshov basis in $GDN(X)$.
\item[(ii)] $0\neq h\in{Id(S)}\Rightarrow\overline{h}=u\overline{D^{t}s}$ for
some $s\in S,   u\in [D^{\omega}X],  t\in \mathbb{N}$.
\item[(iii)]$Irr(S)=\{w\in [D^{\omega}X] \mid w\neq u\overline{D^{t}s},    u\in [D^{\omega}X],   t\in \mathbb{N},   s\in S,  wt(w)=-1\} $ is a linear basis for $GDN(X|S)\triangleq GDN(X)/Id(S)$.
\end{enumerate}
\end{theorem}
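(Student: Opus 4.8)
The plan is to establish the cycle of implications $(i)\Rightarrow(ii)\Rightarrow(iii)\Rightarrow(i)$. The main tools will be Lemma \ref{Novikov ideal} (which identifies $Id(S)$ with the span of weight $-1$ $S$-words), Lemma \ref{novikov key lemma} (the key lemma, handling a pair of $S$-words with the same leading word), Lemma \ref{novikov two part} (every element of $GDN(X)$ can be rewritten, modulo weight $-1$ $S$-words and without raising its leading word, into a $k$-combination of $Irr(S)$-words), together with the elementary fact that $\overline{uD^{t}s}=u\,\overline{D^{t}s}$ for an $S$-word, which follows from Lemma \ref{monomial}(i).

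For $(i)\Rightarrow(ii)$ I would argue by a minimal-counterexample descent. Given $0\neq h\in Id(S)$, write $h=\sum_i\beta_iu_iD^{t_i}s_i$ as in Lemma \ref{Novikov ideal}, so each $\overline{u_iD^{t_i}s_i}$ has weight $-1$. Among all such expressions of $h$ choose one minimizing $w_1:=\max_i\overline{u_iD^{t_i}s_i}$ in the well order, and among those one minimizing the number $N$ of indices attaining $w_1$. If $\overline{h}=w_1$, then $\overline{h}=u_{i_0}\overline{D^{t_{i_0}}s_{i_0}}$ and we are done. If $\overline{h}<w_1$, the $\beta_i$ over the indices attaining $w_1$ must sum to zero, so $N\ge 2$; pick two such indices, say $1$ and $2$. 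Lemma \ref{novikov key lemma} gives $\frac{1}{\alpha_1}u_1D^{t_1}s_1-\frac{1}{\alpha_2}u_2D^{t_2}s_2\equiv 0\ mod(S,w_1)$, i.e. this difference is a $k$-combination of weight $-1$ $S$-words with leading word strictly below $w_1$. Substituting this identity to eliminate one of the two terms produces a new expression of $h$ of the same type in which either $w_1$ strictly decreases, or $w_1$ is still the maximum but is attained by only $N-1$ indices; either way the minimality of $(w_1,N)$ is contradicted. Hence $\overline{h}=w_1=u\,\overline{D^{t}s}$.

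For $(ii)\Rightarrow(iii)$ and $(iii)\Rightarrow(i)$ the arguments are short. In the first, Lemma \ref{novikov two part} already shows $Irr(S)$ spans $GDN(X|S)$; for independence, if $\sum_j\gamma_jw_j\in Id(S)$ with the $w_j\in Irr(S)$ distinct and some $\gamma_j\ne 0$, then this nonzero element of $Id(S)$ has leading word lying in $Irr(S)$, yet by $(ii)$ that leading word has the form $u\,\overline{D^{t}s}$ — contradicting the definition of $Irr(S)$. In the second, given a composition $(D^{t_1}f,D^{t_2}g)_w$, each of its two summands is (up to scalar) a weight $-1$ $S$-word, so the composition lies in $Id(S)$ by Lemma \ref{Novikov ideal} and its leading word is $<w$ since the leading terms cancel; expanding it via Lemma \ref{novikov two part} as $\sum_i\beta_iu_iD^{t_i}s_i+\sum_j\gamma_jw_j$ with all leading words $<w$, and using that this element of $Id(S)$ forces all $\gamma_j=0$ by the independence of $Irr(S)$ modulo $Id(S)$ from $(iii)$, we get $(D^{t_1}f,D^{t_2}g)_w\equiv 0\ mod(S,w)$, so $S$ is a Gröbner-Shirshov basis.

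I expect the genuine obstacle to be the implication $(i)\Rightarrow(ii)$: one must ensure the rewriting step stays inside the span of weight $-1$ $S$-words and that the process is forced to terminate by a strict decrease of the well-ordered pair $(w_1,N)$. Both the weight bookkeeping and the reduction identity for a coinciding pair of leading words are exactly what Lemmas \ref{Novikov ideal} and \ref{novikov key lemma} were designed to supply, so with those in hand the remaining work is the standard diamond-lemma descent.
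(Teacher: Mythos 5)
Your proposal is correct and follows essentially the same route as the paper: the cycle $(i)\Rightarrow(ii)\Rightarrow(iii)\Rightarrow(i)$, with the descent on the pair (maximal leading word, number of terms attaining it) via Lemma \ref{novikov key lemma} for $(i)\Rightarrow(ii)$, and Lemma \ref{novikov two part} together with Lemma \ref{Novikov ideal} for the other two implications. Your minimal-counterexample phrasing is just the paper's induction on the height $(w_1,l)$ in different clothing, and your spelled-out justification that the $\gamma_j$ vanish in $(iii)\Rightarrow(i)$ is the detail the paper leaves implicit.
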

\begin{proof}
$(i)\Rightarrow (ii)$. Let $S$ be a Gr\"{o}bner-Shirshov basis and
$0\neq h\in Id(S).$ Then $h$ has an
expression $h=\sum_{i=1}^n\beta_iu_iD^{t_i}s_{i}$,
where  each $0\neq\beta_i\in k,   \ u_i\in
[D^{\omega}X],  t_{i}\in \mathbb{N},    \ s_i\in S, wt(u_i\overline{D^{t_i}s_{i}})=-1$. Denote $w_i=u_i\overline{D^{t_i}s_{i}},  \ i=1,  2,  \dots ,   n.$ We may assume without
loss of generality that
$$w_1=w_2=\dots  =w_l> w_{l+1}\geq w_{l+2}\geq \dots $$
\noindent for some $l\geq 1$. Then $w_{1}\geq \overline{h}$.

We show the result by induction on $(w_1,  l)$,   where for any $l,  l'\in
\mathbb{N}$ and $w,  w'\in [D^{\omega}X]$,   $(w,  l)<(w',  l')$
lexicographically. We call $(w_1,  l)$ the height of $h$.

If $\overline{h}=w_{1}$ or $l=1$,   then the result is obvious.

Now suppose that $w_{1}>\overline{h}$. Then $l>1$ and $u_1\overline{D^{t_1}s_{1}}=u_2\overline{D^{t_2}s_{2}}$. By Lemma
\ref{novikov key lemma},   we have
\begin{eqnarray*}
&&\beta_1u_1D^{t_1}s_{1}+\beta_2u_2D^{t_2}s_{2}\\
&=&\beta_1(u_1D^{t_1}s_{1}-\frac{\alpha_{1}}{\alpha_{2}}u_2D^{t_2}s_{2})+\frac{\alpha_{1}\beta_{1}+\alpha_{2}\beta_{2}}{\alpha_{2}}u_2D^{t_2}s_{2}\\
&\equiv&\frac{\alpha_{1}\beta_{1}+\alpha_{2}\beta_{2}}{\alpha_{2}}u_2D^{t_2}s_{2} \ \ \ \
mod(S,  w_1),
\end{eqnarray*}
 where $\alpha_{i}=LC(u_iD^{t_i}s_{i})$,   $i=1,  2$. Thus,
\begin{eqnarray*}
h&=&\frac{\alpha_{1}\beta_{1}+\alpha_{2}\beta_{2}}{\alpha_{2}}u_2D^{t_2}s_{2}+\sum_{i=3}^n\beta_iu_iD^{t_i}s_{i}
+\sum_{j=1}^m\gamma_jv_jD^{t'_j}s'_{j},   \ \  v_j\overline{D^{t'_j}s'_{j}}<w_{1},
\end{eqnarray*}
which has height $<(w_1,  l)$. Now the result follows by induction.

$(ii)\Rightarrow (iii).$ By Lemma \ref{novikov two part},   the set
$Irr(S)$ generates the algebra $GDN(X|S)$ as a
$k$-vector space. On the other hand,   suppose that $\sum_{1\leq i\leq n}\gamma_iw_i=0$ in
$GDN(X|S)$,   where each $0\neq \gamma_i\in k$,   $w_i\in Irr(S)$ and $w_1>w_2>\dots >w_n$.
 Then we have $\sum_{1\leq i\leq n}\gamma_iw_i=\sum_{1\leq j\leq m}\beta_ju_jD^{t_j}s_{j}\neq 0$ in $GDN(X)$. So by $(ii)$ we get $w_{1}\notin Irr(S)$,    which
contradicts to the choice of $w_{1}$.

$(iii)\Rightarrow (i).$
 For any $f,  g\in S,  \ t_1,  t_2\in \mathbb{N}$,   denote $w$ a non-trivial common multiple of $\overline{D^{t_1}f}$ and $  \overline{D^{t_2}g}$. Then by Lemma \ref{novikov two part}, we
have
$$(D^{t_1}f,  D^{t_2}g)_{w}=\sum_{u_{i}\overline{D^{t_{i}}s_{i}}< w}\beta_{i}u_{i}D^{t_{i}}s_{i}+\sum_{w_{i}<w}\gamma_{j}w_{j},  $$
where each $\beta_{i},  \gamma_{j}\in k,  \ u_{i}\in [D^{\omega}X],  \ t_{i}\in \mathbb{N},  \ w_{j}\in Irr(S)$ and
$wt( u_{i}\overline{D^{t_{i}}s_{i}})=wt(w_{j})=-1.$
Since $(D^{t_1}f,  D^{t_2}g)_{w}\in Id(S)$ and by $(iii)$,   we have
$$
(D^{t_1}f,  D^{t_2}g)_{w}\equiv 0 \mod(S,w).
$$
Therefore,   $S$ is a Gr\"{o}bner-Shirshov basis in $GDN(X)$.
\end{proof}
\ \

Since for any Gelfand-Dorfman-Novikov tableau
$$w=Y_{p}\circ(Y_{p-1}\circ(\cdots \circ(Y_{2}\circ Y_{1})\cdots  )),  $$
we have
$$ \overline{   w}=\overline{DY_{p}}\cdot \overline{DY_{p-1}} \cdots  \overline{D Y_{2}}\cdot \overline{Y_{1}}  $$
and
\begin{eqnarray*}
 \overline{D Y_{i}}&=&a_{i,  1}[r_{i}-1]a_{i,  r_{i}}[-1]\cdots   a_{i,  3}[-1]a_{i,  2}[-1],   \ \  2\leq i \leq p,   \\
\overline{Y_{1}}&=&a_{1,  1}[r_{1}-1]a_{1,  r_{1}+1}[-1]a_{1,  r_{1}}[-1]\cdots   a_{1,  3}[-1]a_{1,  2}[-1],
\end{eqnarray*}
where
\begin{eqnarray*}
 Y_{i}&=&(\cdots ((a_{i,  1}\circ a_{i,  2})\circ a_{i,  3}) \cdots \circ a_{i,  r_{i}},   \ \  2\leq i \leq p,   \\
Y_{1}&=&(\cdots ((a_{1,  1}\circ a_{1,  2})\circ a_{1,  3}) \cdots \circ a_{1,  r_{1}})\circ a_{1,  r_{1}+1},
\end{eqnarray*}
we immediately get the following proposition:
\begin{proposition} If $S\subseteq GDN(X)$ is a Gr\"{o}bner-Shirshov basis, then the set
$\{w \in GDN(X)\mid w $ is a Gelfand-Dorfman-Novikov  tableau,   $\overline{w}\in Irr(S) \}$ is a linear basis for $GDN(X|S)$.
\end{proposition}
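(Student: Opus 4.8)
The plan is to use Theorem~\ref{Novikov cd lemma}, which under the standing hypothesis already hands us the linear basis $Irr(S)$ of $GDN(X|S)$ made up of weight~$-1$ monomials, and then to transport this basis to the set of Gelfand--Dorfman--Novikov tableaux by a triangular change of coordinates built from the displayed expressions for $\overline{DY_{i}}$ and $\overline{Y_{1}}$.

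The first and principal step is to observe that $w\longmapsto\overline{w}$ is a bijection from the set of all Gelfand--Dorfman--Novikov tableaux onto $\{v\in[D^{\omega}X]\mid wt(v)=-1\}$. Injectivity is read off the formulas above: in $\overline{w}$ the letters of non-negative index are exactly the first-column entries $a_{1,1}[r_{1}-1],\dots,a_{p,1}[r_{p}-1]$, so they recover $p$, every $a_{i,1}$ and every row length $r_{i}$ (ties among equal $r_{i}$ being fixed by rule (a)); the remaining letters all carry index $-1$, and rule (b) makes their bottom-to-top, left-to-right reading non-decreasing, so sorting these letters and cutting the outcome into successive blocks of sizes $r_{p}-1,r_{p-1}-1,\dots,r_{2}-1,r_{1}$ recovers $w$ completely. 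Since $w$ and $\overline{w}$ involve the same multiset of generators, this injection respects the grading of $GDN(X)$ by multidegree in $X$, whose homogeneous components are finite-dimensional; as the tableaux and the weight~$-1$ monomials are both $k$-bases of $GDN(X)\cong GDN_{-1}(X)$, corresponding components have equal dimension, so the injection is onto each of them. In particular $w\longmapsto\overline{w}$ restricts to a bijection of $T:=\{w\mid w \text{ a GDN tableau},\ \overline{w}\in Irr(S)\}$ onto $Irr(S)$; write $w_{v}$ for the unique member of $T$ with $\overline{w_{v}}=v$.

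Next I would check that $T$ is expressed unitriangularly over $Irr(S)$ inside $GDN(X|S)$. In $GDN_{-1}(X)$ write $w_{v}=v+\sum_{i}\lambda_{i}v_{i}$, a finite sum with each $v_{i}$ a weight~$-1$ monomial and $v_{i}<v$. By Lemma~\ref{novikov two part} applied to each $v_{i}$, modulo $Id(S)$ every $v_{i}$ becomes a finite $k$-combination of elements of $Irr(S)$ that are $<v$, whereas $v$ is itself in $Irr(S)$. Hence in $GDN(X|S)$
$$
w_{v}=v+\sum_{v'\in Irr(S),\ v'<v}c_{v,v'}\,v',
$$
a finite sum. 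Because $<$ well-orders $[D^{\omega}X]$ and $Irr(S)$ is a $k$-basis of $GDN(X|S)$, the usual transfinite back-substitution (row-finiteness plus the unitriangular shape over a well-ordered index set) yields that $\{w_{v}\mid v\in Irr(S)\}$, the image of $T$, is again a $k$-basis of $GDN(X|S)$; this is the assertion of the Proposition.

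The only genuine obstacle is the bijection of the first step; everything afterwards is the standard Gr\"{o}bner--Shirshov triangularity bookkeeping. Its content is the reconstruction of a tableau from its leading monomial via the filling rules (a) and (b), which is already implicit in the description of $GDN(X)$ in \cite{codimension,trees} and is rendered mechanical by the explicit formulas for $\overline{DY_{i}}$ and $\overline{Y_{1}}$ stated just above.
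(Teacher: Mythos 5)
Your argument is correct and follows exactly the route the paper intends: the paper records the leading-word formulas for $\overline{DY_i}$ and $\overline{Y_1}$ and declares the proposition immediate, the implicit content being precisely the bijection $w\mapsto\overline{w}$ from tableaux onto weight $-1$ monomials together with the unitriangular change of basis over the well-ordered set $Irr(S)$, which you have simply written out in full. The only cosmetic remark is that your description of the non-negative-index letters of $\overline{w}$ degenerates for the length-one tableau $w=a$ (where $r_1=0$), but your dimension-count already handles surjectivity there.
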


\section{Applications}
\subsection{An example}
In the paper \cite{ example},   the authors list a lot of left-Gelfand-Dorfman-Novikov algebras in low dimensions.   We can get their corresponding right-Gelfand-Dorfman-Novikov algebras using  $a\circ_{op} b\triangleq b\circ a$,
see also \cite{codimension}.

\begin{example} (\cite{ example})
Let $X=\{e_{1},  e_{2},  e_{3},  e_{4}\},  \ S=\{ e_{2}[0] e_{1}[-1]=e_{3}[-1],   e_{3}[0] e_{1}[-1]=e_{4}[-1],  $ $e_{i}[0] e_{j}[-1]=0,  $
if $\ (i,  j)\notin \{(2,  1),  (3,  1)\},   1\leq i,  j\leq 4\}$. Then $S$ is a Gr\"{o}bner-Shirshov basis in $GDN(X)$. It follows from Theorem \ref{Novikov cd lemma} that $\{e_{1}[-1],  e_{2}[-1],  e_{3}[-1],  e_{4}[-1]\}$ is a linear basis of the Gelfand-Dorfman-Novikov algebra $GDN(X|S)$.
\end{example}
\begin{proof}
Denote
$$
f_{ij}: \ \ \ e_{i}[0]e_{j}[-1]=\sum_{1\leq l\leq 4}\alpha_{ij}^{l}e_{l}[-1] \in S,   \ 1\leq i,  j\leq 4.
$$
Before checking the compositions,   we prove the following claims.

Claim (i): Let $w=e_{i}[n]e_{i_{1}}[-1]\cdots $$e_{i_{n+1}}[-1]$,   $n\geq 0$.
Then $w=\sum\alpha_{j}u_{j}D^{t_{j}}s_{j},   $  where  each  $\overline{u_{j}D^{t_{j}}s_{j}}\leq w$,   $s_{j}\in S$ if  $ i_{l}\neq  1$ for some $1\leq l\leq n+1$.

We show Claim (i) by induction on $n$.
For $n=0$ or $1$,   the result follows immediately. Suppose $t\geq 2$ and the result holds for any $ n<t.$ Then
\begin{eqnarray*}
w&=&e_{i}[t]e_{i_{1}}[-1]\cdots   e_{i_{t+1}}[-1]\\
&=&D^{t}(e_{i}[0]e_{i_{1}}[-1]-\sum_{1\leq m\leq 4}\alpha_{i,  i_1}^{m}e_{m}[-1])e_{i_{2}}[-1]\cdots   e_{i_{t+1}}[-1]          \\
&&-\sum_{0\leq p\leq t-1}\binom{t}{p}e_{i}[p]e_{i_1}[t-1-p]e_{i_{2}}[-1]\cdots   e_{i_{t+1}}[-1]\\
&&+\sum_{1\leq m\leq 4}\alpha_{i,  i_1}^{m}e_{m}[t-1]e_{i_{2}}[-1]\cdots   e_{i_{t+1}}[-1].
\end{eqnarray*}
If  for any $1\leq l\leq n+1$,   $i_{l}\neq 1$,   then by induction hypothesis,   the result follows immediately.
Otherwise,   say $i_{1}= 1$,   then $ i_{l}\neq  1$ for some $2\leq l\leq n+1$.  By induction hypothesis,   the result follows immediately.

Claim (ii): For any $n_{1},   n_{2}\geq 0$,   we have $w=e_{l}[n_{1}]e_{i}[n_{2}]e_{i_{1}}[-1]\cdots   e_{i_{n_{1}+n_{2}+1}}[-1]=\sum\alpha_{j}u_{j}D^{t_{j}}s_{j},   \ \mbox{with \ each } \overline{u_{j}D^{t_{j}}s_{j}}\leq w.$

We show Claim (ii) by induction on $n_{1}$. If $n_{1}=0$,   then
\begin{eqnarray*}
w&=&(e_{l}[0]e_{i_{1}}[-1]-\sum_{1\leq m\leq 4}\alpha_{l,  i_1}^{m}e_{m}[-1])e_{i}[n_{2}]e_{i_{2}}[-1]\cdots   e_{i_{n_{1}+n_{2}+1}}[-1]\\
&&+\sum_{1\leq m\leq 4}\alpha_{l,  i_1}^{m}e_{m}[-1]e_{i}[n_{2}]e_{i_{2}}[-1]\cdots   e_{i_{n_{1}+n_{2}+1}}[-1].
\end{eqnarray*}
By Claim (i),   the result follows immediately.
If $n_{1}>0$,   then
\begin{eqnarray*}
w&=&D^{n_1}(e_{l}[0]e_{i_{1}}[-1]-\sum_{1\leq m\leq 4}\alpha_{l,  i_1}^{m}e_{m}[-1])e_{i}[n_{2}]e_{i_{2}}[-1]\cdots   e_{i_{n_{1}+n_{2}+1}}[-1] \\
&&-\sum_{0\leq p\leq n_{1}-1}\binom{n_1}{p}e_{l}[p]e_{i_1}[n_{1}-1-p]e_{i}[n_{2}]e_{i_{2}}[-1]\cdots   e_{i_{n_{1}+n_{2}+1}}[-1] \\
&&+\sum_{1\leq m\leq 4}\alpha_{l,  i_1}^{m}e_{m}[n_{1}-1]e_{i}[n_{2}]e_{i_{2}}[-1]\cdots   e_{i_{n_{1}+n_{2}+1}}[-1].
\end{eqnarray*}
By induction hypothesis,   the result follows immediately.

For any $ t\in \mathbb{N},   u\in [D^{\omega}X]$, if $wt(\overline{(D^{t}f_{ij})u})=-1$, $|u|>0$ and $\overline{(D^{t}f_{ij})u}\neq e_{i}[t](e_{1}[-1])^{t+1}$, then by Claims (i) and (ii),   we have
$$
(D^{t}f_{ij})u-\overline{(D^{t}f_{ij})u}=\sum_{0\leq p\leq t-1}e_{i}[p]e_{j}[t-1-p]u+\sum_{1\leq m\leq 4}\alpha_{i,  j}^{m}e_{m}[t-1]u\equiv 0 \ mod(S,\overline{(D^{t}f_{ij})u} ).
$$
Since for any $t\in \mathbb{N}$, $ j\neq l$, each critical common multiple of $D^{t}f_{ij}\wedge D^{t}f_{il}$ has form $w=e_{i}[t]e_{j}[-1]e_{l}[-1]e_{i_{1}}[-1]\cdots e_{i_{t-1}}[-1]$, we get
$$
(D^{t}f_{ij},  D^{t}f_{il})_{w}\equiv w-w\equiv 0 \ mod(S,w).
$$
For the case of $D^{t_1}f_{i_{1}j}\wedge D^{t_2}f_{i_{2}j}$, where $t_{1}\neq t_{2} $ or $i_{1}\neq i_{2}$, the proof is almost the same.
So $S$ is a Gr\"{o}bner-Shirshov basis in $GDN(X)$.
\end{proof}

\subsection{PBW type theorem in Shirshov form}\label{subsection PBW}

\begin{theorem}(PBW type theorem in Shirshov form)\label{Novikov PBW}
Let $GDN(X)$ be a free Gelfand-Dorfman-Novikov algebra, $k\{X\}$ be a free commutative differential algebra, $S\subseteq GDN(X)$ and $S^{c}$ a Gr\"{o}bner-Shirshov basis in $k\{X\}$,   which is obtained from $S$ by Buchberger-Shirshov algorithm. Then

(i) $S'=\{uD^{m}s\mid s\in S^{c},   u\in [D^{\omega}X],   m\in \mathbb{N},   wt(u\overline{D^{m}s})=-1\}$ is a Gr\"{o}bner-Shirshov basis in $GDN(X)$.

(ii)  The set $Irr(S')=\{w\in [D^{\omega}X] \mid w\neq u\overline{D^{t}s},    u\in [D^{\omega}X],   t\in \mathbb{N},   s\in S^{c},  wt(w)=-1\}=GDN(X)\cap Irr[S^{c}]$ is
a linear basis of $GDN(X|S)$. Thus,   any Gelfand-Dorfman-Novikov algebra $GDN(X|S)$ is embeddable into its universal enveloping commutative differential algebra $k\{X|S\}$.
\end{theorem}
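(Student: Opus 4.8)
The plan is to reduce everything to the Composition--Diamond lemma for commutative differential algebras (Theorem~\ref{differential cd lemma}), exploiting that $S\subseteq GDN(X)=GDN_{-1}(X)$ is weight-homogeneous of weight $-1$, whence so is $S^{c}$ (the Buchberger--Shirshov algorithm only adjoins compositions, and a composition of weight-homogeneous polynomials is again weight-homogeneous). Thus $Id[S]=Id[S^{c}]$ is spanned by weight-homogeneous elements and $k\{X|S\}=k\{X\}/Id[S]=\bigoplus_{n}V_{n}$ is graded by weight, $V_{n}$ being spanned by the classes of the weight-$n$ words. Next I would establish $Id(S)=Id[S]\cap GDN_{-1}(X)$: by Lemma~\ref{homogeneous} a weight-$(-1)$ element of $Id[S]$ is a $k$-combination of $S$-words of weight $-1$, each of which lies in $Id(S)$ by Lemma~\ref{Novikov ideal}, and the reverse inclusion is clear. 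Consequently the composite $GDN_{-1}(X)\hookrightarrow k\{X\}\to k\{X|S\}$, a homomorphism of Gelfand--Dorfman--Novikov algebras for $f\circ g=(Df)g$ because $Id[S]$ is a differential ideal, has kernel $Id(S)$ and image $V_{-1}$; hence it induces an isomorphism of Gelfand--Dorfman--Novikov algebras $\pi\colon GDN(X|S)\to V_{-1}\subseteq k\{X|S\}$, which is the embedding of $GDN(X|S)$ into its universal enveloping commutative differential algebra claimed in (ii). Finally, $Id(S')=Id(S)$: on the one hand $S\subseteq S'$ (take $u=1$, $m=0$), on the other hand every $uD^{m}s\in S'$ lies in $Id[S]\cap GDN_{-1}(X)=Id(S)$, so $S'\subseteq Id(S)$; in particular $GDN(X|S')=GDN(X|S)$.

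For part (i) I would verify statement (ii) of the Composition--Diamond lemma for Gelfand--Dorfman--Novikov algebras (Theorem~\ref{Novikov cd lemma}) for the set $S'$. Let $0\neq h\in Id(S')=Id(S)\subseteq Id[S^{c}]$. Since $Id(S)\subseteq GDN_{-1}(X)$ consists of weight-homogeneous elements of weight $-1$, we have $wt(\overline{h})=-1$. As $S^{c}$ is a Gr\"{o}bner--Shirshov basis in $k\{X\}$, Theorem~\ref{differential cd lemma} gives $\overline{h}=u\overline{D^{t}s}$ for some $s\in S^{c}$, $u\in[D^{\omega}X]$, $t\in\mathbb{N}$, and then $wt(u\overline{D^{t}s})=-1$. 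Put $\sigma:=uD^{t}s$; by the definition of $S'$ we have $\sigma\in S'$, and $\overline{\sigma}=u\overline{D^{t}s}=\overline{h}$, so $\overline{h}=1\cdot\overline{D^{0}\sigma}$ with $\sigma\in S'$. Hence $S'$ satisfies (ii) of Theorem~\ref{Novikov cd lemma}, so $S'$ is a Gr\"{o}bner--Shirshov basis in $GDN(X)$.

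Part (ii) then follows from Theorem~\ref{Novikov cd lemma}, which says that $Irr(S')$ --- the set of $w\in[D^{\omega}X]$ with $wt(w)=-1$ that are not of the form $u\overline{D^{t}\sigma}$, $u\in[D^{\omega}X]$, $t\in\mathbb{N}$, $\sigma\in S'$ --- is a $k$-basis of $GDN(X|S')=GDN(X|S)$; it remains to identify this set with $GDN(X)\cap Irr[S^{c}]$, i.e. to prove $\{u\overline{D^{t}\sigma}\mid\sigma\in S'\}\cap\{w\mid wt(w)=-1\}=\{u\overline{D^{t}s}\mid s\in S^{c}\}\cap\{w\mid wt(w)=-1\}$ (with $u$ over $[D^{\omega}X]$ and $t$ over $\mathbb{N}$). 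Inclusion ``$\supseteq$'' is the trick above ($\sigma=uD^{t}s$). For ``$\subseteq$'', using $\overline{D^{t}f}=\overline{D^{t}\overline{f}}$ (a consequence of Lemma~\ref{monomial} in characteristic $0$) and the fact that $D^{t}$ raises the largest letter of a word by $t$, write $\sigma=u_{0}D^{m_{0}}s$ and $\overline{s}=a[i]v$ with $a[i]$ the largest letter of $\overline{s}$; then $\overline{\sigma}=u_{0}\,a[i+m_{0}]\,v$, and $u\overline{D^{t}\sigma}$ equals $(uu_{0})\overline{D^{m_{0}+t}s}$ if $a[i+m_{0}]$ is a largest letter of $\overline{\sigma}$, and equals $u'\overline{D^{m_{0}}s}$ for a suitable $u'\in[D^{\omega}X]$ if a letter of $u_{0}$ is larger --- in either case a leading-word multiple of $s\in S^{c}$. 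Taking complements in $\{w\mid wt(w)=-1\}$ gives $Irr(S')=GDN(X)\cap Irr[S^{c}]$, finishing (ii). (One can also prove (ii) directly, avoiding this identity: $Irr[S^{c}]$ is a $k$-basis of $k\{X|S\}$ by Theorem~\ref{differential cd lemma}, and being weight-homogeneous its weight-$(-1)$ part is a $k$-basis of $V_{-1}$, which is isomorphic to $GDN(X|S)$ via $\pi$.)

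The step I expect to cost the most effort is the set identity in part (ii): reconciling irreducibility relative to the leading words of the infinitely many elements of $S'$ with irreducibility relative to $S^{c}$. The ``$\supseteq$'' direction is a one-liner, but ``$\subseteq$'' forces the case analysis above on how a leading word transforms under $D^{t}$ and under multiplication by a monomial. The remaining ingredients --- the weight grading of $k\{X|S\}$, the identities $Id(S)=Id[S]\cap GDN_{-1}(X)$ and $Id(S')=Id(S)$, and the transport of the basis $Irr[S^{c}]$ to the weight-$(-1)$ component --- are routine bookkeeping on top of Lemmas~\ref{monomial}, \ref{Novikov ideal}, \ref{homogeneous} and Theorem~\ref{differential cd lemma}.
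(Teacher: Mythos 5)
Your proposal is correct, but for part (i) it takes a genuinely different route from the paper's. The paper verifies condition (i) of Theorem \ref{Novikov cd lemma} head-on: given a non-trivial common multiple $w$ in $GDN(X)$ of the leading words of two elements $u_{1}D^{m_{1}}s_{1},u_{2}D^{m_{2}}s_{2}\in S'$, it expands the composition inside $k\{X\}$ via Theorem \ref{differential cd lemma} (using that $S^{c}$ is a Gr\"{o}bner-Shirshov basis there) as $\sum\delta_{l}u_{l}D^{j_{l}}s_{l}$ with $\overline{u_{l}D^{j_{l}}s_{l}}<w$, and then invokes Lemma \ref{homogeneous} to arrange $wt(\overline{u_{l}D^{j_{l}}s_{l}})=-1$, so that each summand is an $S'$-word and the composition is trivial. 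You instead verify condition (ii) of Theorem \ref{Novikov cd lemma}: having established $Id(S')=Id(S)=Id[S]\cap GDN_{-1}(X)$ (which the paper also proves, and records in the Remark after the theorem), you take $0\neq h\in Id(S')\subseteq Id[S^{c}]$ with $wt(\overline{h})=-1$, get $\overline{h}=u\overline{D^{t}s}$, $s\in S^{c}$, from Theorem \ref{differential cd lemma}, and observe that $\sigma=uD^{t}s$ is itself an element of $S'$ with $\overline{\sigma}=\overline{h}$. Both arguments rest on the same two pillars --- the weight-homogeneity of $S^{c}$ (so that $S'\subseteq GDN(X)$) and the ideal identity $Id(S')=Id(S)$, proved the same way in both --- but yours trades the explicit composition check for the equivalence (ii)$\Leftrightarrow$(i) in the Composition-Diamond lemma, which shortens part (i); the paper's version has the merit of showing concretely how compositions in $GDN(X)$ descend from compositions in $k\{X\}$. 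For part (ii) you actually supply more detail than the paper: the case analysis showing that $u\overline{D^{t}\sigma}$ for $\sigma=u_{0}D^{m_{0}}s\in S'$ is again of the form $u'\overline{D^{t'}s}$ with $s\in S^{c}$ is exactly what is needed to justify the set identity $Irr(S')=GDN(X)\cap Irr[S^{c}]$, which the paper asserts with only the easy inclusion indicated; your alternative grading argument for the embedding coincides with the paper's Remark.
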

\begin{proof}
(i). We first show that any $h\in S^c$ has the form $h=\sum_{i\in I_{h}}\gamma_{i}w_{i},   \ \mbox{with each }  \gamma_{i}\neq0,   \ wt(w_{i})=wt(w_{i'}),    \ i,  \ i'\in I_{h}$. Suppose
$$
f=\sum_{i\in I_{f}}\beta_{i}w_{i},   \ \mbox{ with } wt(w_{i})=wt(w_{i'}) \mbox{ for any }  i,  \ i'\in I_{f},
$$
$$
g=\sum_{i\in I_{g}}\beta_{i}w_{i},   \ \mbox{ with } wt(w_{i})=wt(w_{i'}),   \mbox{ for any } i,  \ i'\in I_{g},
$$
and
$$
(D^{t}f,  D^{t'}g)_{w'}=\frac{1}{\alpha_{1}}uD^{t}f-\frac{1}{\alpha_{2}}vD^{t'}g=\sum_{j\in J}\gamma_{j}w_{j}\ \mbox{in $k\{X\}$}.
$$
Then it is obvious that $wt(w_{j})=wt(w_{j'}),   \forall  j,   j'\in J$. So whenever we add some non-trivial composition to $S$ while doing the Buchberger-Shirshov algorithm,
any monomial of such a composition will share the same weight. It follows that $S'\subseteq GDN(X)$.

If $w=w_{1}\overline{D^{t_{1}}(u_1D^{m_{1}}s_{1})}=w_{2}\overline{D^{t_{2}}(u_2D^{m_{2}}s_{2})}\in GDN(X)$ is a non-trivial common
multiple of $\overline{D^{t_{1}}(u_1D^{m_{1}}s_{1})}$ and $\overline{D^{t_{2}}(u_2D^{m_{2}}s_{2})}$,
where $s_{1},  s_{2}\in S^{c}$,   $t_1,  t_{2}\in \mathbb{N}$,
$f=u_{1}D^{m_{1}}s_1,   g=u_{2}D^{m_{2}}s_2\in S'$,
then by Theorem \ref{differential cd lemma},   we have
$$
(D^{t_1}f,  D^{t_2}g)_{w}=\frac{1}{\alpha_{1}}w_{1}D^{t_{1}}(u_{1}D^{m_{1}}s_1)-\frac{1}{\alpha_{2}}w_{2}D^{t_{2}}(u_{2}D^{m_{2}}s_2)=\sum_{l\in L}\delta_{l}u_{l}D^{j_{l}}s_{l},
$$
where each  $\delta_{l}\in k,   u_{l}\in [D^{\omega}X],   s_{l}\in S^{c},   j_{l}\in \mathbb{N},    \overline{u_{l}D^{t_{l}}s_{l}}<w$.
Furthermore,   by Lemma \ref{homogeneous},   we can assume that for each $l\in L$,   $wt(\overline{u_{l}D^{j_{l}}s_{l}})=-1$,   which means $(D^{t_1}f,  D^{t_2}g)_{w}\equiv 0 \ mod(S',  w)$.
So $S'$ is a Gr\"{o}bner-Shirshov basis in $GDN(X)$.

It remains to show that the ideal $Id(S) $ of $GDN(X)$ generated by $S$ is $Id(S')$. It
is clear that $S\subseteq Id(S')$. Since $S^{c} \subseteq Id[S^{c}]=Id[S]$,   for any $s\in S^{c}$,   we have $s=\sum \beta_{i}u_{i}D^{t_i}s_{i}$,   where each $\beta_{i}\in k,   u_{i}\in [D^{\omega}X],   s_i\in S$
and $wt(\overline{u_{i}D^{t_i}s_{i}})=wt(\overline{s})$. By Lemma \ref{Novikov ideal},   it follows that $S'\subseteq Id(S) $.

(ii).  Since
$\{w\in [D^{\omega}X] \mid w\neq u\overline{D^{t}s},  \ u\in [D^{\omega}X],   t\in \mathbb{N},   s\in S',  wt(w)=-1\}
=\{w\in [D^{\omega}X]\mid w\neq u\overline{D^{t}s},    \ u\in [D^{\omega}X],   t\in \mathbb{N},   s\in S^{c},  wt(w)=-1\}$ by (i),   we have
$Irr(S')\subseteq Irr[S^{c}]$. The result follows immediately.
\end{proof}

\begin{remark}
Theorem \ref{Novikov PBW} essentially offers another way to calculate Gr\"{o}bner-Shirshov basis in $GDN(X)$ and it indicates some close connection between $GDN(X|S)$ and its universal enveloping algebra $k\{X|S\}$. In fact, by Lemma \ref{Novikov ideal}, we have $GDN(X)\cap Id[S]=Id(S)$. It is clear that $Id[S]$ is a subalgebra of $(k\{X\},\circ)$ as Gelfand-Dorfman-Novikov algebra. Then we have a Gelfand-Dorfman-Novikov algebra isomorphism as follows:
$$
GDN(X)/ Id(S)=GDN(X)/(Id[S]\cap GDN(X))\cong(GDN(X)+Id[S])/Id[S]\leq (k\{X|S\},\circ).
$$

\end{remark}
\subsection{Algorithms for word problems}
 The general observation shows that for a homogeneous variety the word problem in an algebra with finite number of homogeneous relations is always algorithmically solvable. In this subsection,   we will provide   algorithms for solving such word problems.

Let $k\{X|S\}$  be a commutative differential algebra and $S=\{f_{i}\mid 1\leq i\leq p\},   p\in \mathbb{N}$,   where  $S$ is $D\cup X$-homogeneous
in the sense that for any $f=\sum^{q}_{j=1}\beta_jw_{j}\in S$,   we have $|w_{1}|_{D\cup X}=|w_{2}|_{D\cup X}=\dots =|w_{q}|_{D\cup X}$.

In this subsection,   we always assume that $S\subset k\{X\}$ is a non-empty $D\cup X$-homogeneous set.
 We call $S$ a minimal set,   if there are no $f,   g\in S$ with $f\neq g$,   such that $\overline{f}=u\overline{D^{t}g}$ for any $u\in [D^{\omega}X],   t\in \mathbb{N}$.
For any $f, g\in S$,   if $\overline{f}=u\overline{D^{t}g}$ and the composition $[f,  D^{t}g]_{\overline{f}}=\frac{1}{\alpha_{1}}f-\frac{1}{\alpha_{2}}uD^{t}g\equiv 0 \ mod(S,  w)$,
then we delete $f$ from $S$ to reduce the set $S$ in one step to a new set $S_0$,   i.e.,   $S\longrightarrow S_{0}=S\setminus\{f\}$; If $\overline{f}=u\overline{D^{t}g}$ and the
 composition $[f,  D^{t}g]_{\overline{f}}=\frac{1}{\alpha_{1}}f-\frac{1}{\alpha_{2}}uD^{t}g \not\equiv 0 \ mod(S,  w)$,
 then we replace $f$ by $h\triangleq \frac{1}{\alpha_{1}}f-\frac{1}{\alpha_{2}}uD^{t}g$ to reduce the set $S$ in one step to a new set $S_0$,
 i.e.,   $S\longrightarrow S_{0}=(S\setminus \{f\})\cup \{h\}$,   where $ \alpha_{1}=LC(f)$ and $\alpha_{2}=LC(D^{t}g)$.
In both cases,    we  say that $f$ is reduced by $g$. It is clear that $S_0$ is also a $D\cup X$-homogeneous set.

\begin{lemma}\label{keep trivial}
If $|S|<\infty$ and $S$ is $D\cup X$-homogeneous,   then we can effectively reduce $S$ into a minimal $D\cup X$-homogeneous set $S^{(0)}$ in finitely many steps,
 such that $Id[S]=Id[S^{(0)}]$ and for any $f\in S$,   we have $f=\sum\beta_{q}u_{q}D^{t_{q}}s_{q}$,   with $|\overline{u_{q}D^{t_{q}}s_{q}}|_{D\cup X}=|\overline{f}|_{D\cup X}$ and $\overline{u_{q}D^{t_{q}}s_{q}}\leq \overline{f}$,
 where each $\beta_{q}\in k,  u_q\in [D^{\omega}X],   s_{q}\in S^{(0)},   t_{q}\in \mathbb{N}.$
 \end{lemma}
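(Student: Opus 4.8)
The plan is to treat the reduction procedure as a rewriting process on finite $D\cup X$-homogeneous subsets of $k\{X\}$ and to prove three things: the process always halts; a single reduction step preserves both the generated ideal and $D\cup X$-homogeneity; and after a single step every member of the old set can be written through the new set in the restricted shape the lemma demands. Running the (finitely many) steps and accumulating these facts then produces $S^{(0)}$ together with the required expressions for the original $f\in S$.

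For termination --- which I regard as the real obstacle, since a step may permute the elements of $S$ and even re-reduce an element by one that is itself reduced later, so that a crude measure like $|S|$ does not work --- I would attach to each finite set $T$ the finite multiset $M(T)=\{\!\{\,\overline{f}\mid f\in T\,\}\!\}$ of leading words and compare such multisets by the multiset extension of the well-order $<$ on $[D^{\omega}X]$; this extension is again well-founded. A deletion step deletes $\overline{f}$ from $M$. A replacement step replaces $\overline{f}$ by $\overline{h}$, where $h=\frac{1}{\alpha_{1}}f-\frac{1}{\alpha_{2}}uD^{t}g$; since $\overline{f}=\overline{uD^{t}g}$ and the coefficient of $\overline{f}$ equals $1$ in each of $\frac{1}{\alpha_{1}}f$ and $\frac{1}{\alpha_{2}}uD^{t}g$, these cancel, so $h\neq 0$ and $\overline{h}<\overline{f}$. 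In either case $M$ strictly decreases, so there is no infinite chain of reductions; and a set on which the procedure cannot proceed is, by the definition, precisely a minimal set. Effectiveness is clear: only finitely many generators occur in $S$, so at each step only finitely many divisibilities $\overline{f}=u\overline{D^{t}g}$ and, within a fixed $D\cup X$-length, only finitely many monomials have to be examined.

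Next I would check the invariants at one step. In a deletion step, $h\equiv 0\ mod(S,\overline{f})$ exhibits $h$ as a combination of $S$-words of leading word $<\overline{f}$; every $S$-word that involves $f$ itself has leading word $\geq\overline{f}$ (since $\overline{D^{r}\overline{f}}\geq\overline{f}$ and multiplying by a monomial does not decrease it), so in fact $h\in Id[S\setminus\{f\}]$ and thus $f=\alpha_{1}h+\frac{\alpha_{1}}{\alpha_{2}}uD^{t}g\in Id[S\setminus\{f\}]$; in a replacement step $f\in Id[(S\setminus\{f\})\cup\{h\}]$ while $h\in Id[S]$. As for homogeneity, $h$ is the difference of two elements that are $D\cup X$-homogeneous of $D\cup X$-length $|\overline{f}|_{D\cup X}$, so it is one too, and $D\cup X$-homogeneity is preserved.

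Finally, for the rewriting property I would prove, by downward induction along the chain $S=S_{0}\to S_{1}\to\dots\to S_{N}=S^{(0)}$, the stronger statement that every element $f$ of every $S_{i}$ has an expression $\sum_{q}\beta_{q}u_{q}D^{t_{q}}s_{q}$ with $s_{q}\in S^{(0)}$, $\overline{u_{q}D^{t_{q}}s_{q}}\leq\overline{f}$, and $|\overline{u_{q}D^{t_{q}}s_{q}}|_{D\cup X}=|\overline{f}|_{D\cup X}$. The base case $i=N$ is the trivial expression $f=1\cdot D^{0}f$. For the inductive step, only the reduced element $f$ is new in $S_{i}$ compared with $S_{i+1}$; I write $f=\alpha_{1}h+\frac{\alpha_{1}}{\alpha_{2}}uD^{t}g$ in a replacement step, or $f=\alpha_{1}\sum_{j}\gamma_{j}v_{j}D^{r_{j}}p_{j}+\frac{\alpha_{1}}{\alpha_{2}}uD^{t}g$ in a deletion step, where by Lemma~\ref{homogeneous} the expansion of $h$ may be taken $D\cup X$-homogeneous and each $p_{j}\neq f$ (because $\overline{v_{j}D^{r_{j}}p_{j}}<\overline{f}$), so $h,g,p_{j}\in S_{i+1}$ and the induction hypothesis applies to them. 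Substituting the inductive expressions for $h,g,p_{j}$, using the Leibniz rule to push the outer powers of $D$ inside each product, and expanding each $D^{a}u_{q}$ into monomials turns everything into a sum of $S^{(0)}$-words; Lemma~\ref{monomial}(i) (compatibility of $<$ with multiplication and with $D$ on leading words) keeps every resulting leading word $\leq\overline{f}$, and a direct count of the occurrences of $D$ and of generators keeps every $D\cup X$-length equal to $|\overline{f}|_{D\cup X}$. The case $i=0$ is exactly the assertion of the lemma; this last paragraph is where most of the writing goes, but it is only careful bookkeeping with Lemma~\ref{monomial}(i) and the Leibniz rule, with no conceptual difficulty.
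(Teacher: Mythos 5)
Your proposal is correct and follows essentially the same route as the paper: a well-founded measure on the leading words of the set (your multiset extension of $<$ is just a cleaner packaging of the paper's lexicographic comparison of $ord(S)=(p,\overline{f_p},\dots,\overline{f_1})$) gives termination, and induction along the reduction chain $S\to S_{01}\to\dots\to S^{(0)}$ gives the ideal equality and the bounded-leading-word, length-preserving rewriting of each $f\in S$, which the paper states with much less detail. One cosmetic remark: in the replacement step, $h\neq 0$ follows from the case hypothesis $h\not\equiv 0\ mod(S,\overline{f})$ rather than from the cancellation of leading terms, which only yields $\overline{h}<\overline{f}$ once $h\neq 0$ is known.
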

\begin{proof}
Suppose $S=\{f_{i}\mid 1\leq i\leq p\},   p\in \mathbb{N}$ and $1\leq |\overline{f_{1}}|_{D\cup X}\leq|\overline{f_{2}}|_{D\cup X}\leq\dots \leq|\overline{f_{p}}|_{D\cup X}.$
Given $f,  g\in S$,   suppose
$$\overline{f}=a_{n}[i_n]\cdots   a_{1}[i_1] \ \mbox{and } \overline{g}=b_{m}[j_m]\cdots   b_{1}[j_1],  $$
with
 $a_{n}[i_n]\geq \dots \geq a_{1}[i_1],  b_{m}[j_m]\geq \dots \geq b_{1}[j_1] \ \mbox{and }j_m\leq i_n.$
To decide whether $g$ can reduce $f$ or not,   we only need to check whether one of $\overline{g}$,   $\overline{D^{1}g}$,   \dots  ,   $\overline{D^{i_n-j_m}g}$ is a subword of $\overline{f}$ or not.
Define $ord(S)=(p,  \overline{f_{p}},   \overline{f_{p-1}},   \dots  ,   \overline{f_{1}} )$. Then if one reduce $S$ in one step to $S_{01}$,   we have $ord(S_{01})<ord(S)$ lexicographically.
 Therefore,   $S$ can be reduced into a minimal set $S^{(0)}$ in finitely many steps,   say,   $S\longrightarrow S_{01}\longrightarrow S_{02} \longrightarrow\dots \longrightarrow S_{0l}=S^{(0)}$.
 Then by induction on $l$,   we easily get each $Id[S_{0m}]=Id[S]$ and for any $f\in S$,   we have $f=\sum\beta_{q}u_{q}D^{t_{q}}s_{q}$,   with $|\overline{u_{q}D^{t_{q}}s_{q}}|_{D\cup X}=|\overline{f}|_{D\cup X}$
 and $\overline{u_{q}D^{t_{q}}s_{q}}\leq \overline{f}$,   where $ 1\leq m\leq l,  \beta_{q}\in k,   u_q\in [D^{\omega}X],  s_{q}\in S_{0m},   t_{q}\in \mathbb{N}.$
\end{proof}

Suppose that $S$ is a minimal set and
$$S=S^{(0)}=\{f_{i}\mid 1\leq i\leq p\},   p\in \mathbb{N},  $$
where
 $1\leq |\overline{f_{1}}|_{D\cup X}\leq|\overline{f_{2}}|_{D\cup X}\leq\dots \leq|\overline{f_{p}}|_{D\cup X}.$
For any $f,  g\in S^{(0)},    t_1,  t_2 \in \mathbb{N},   t_1,  t_2 \leq 1,   w=lcm(\overline{D^{t_{1}}f},  \overline{D^{t_{2}}g})$,
we will check composition $[D^{t_1}f,  D^{t_2}g]_{w}$ whenever $w$ is a non-trivial common multiple of $\overline{D^{t_{1}}f} \mbox{ and } \overline{D^{t_{2}}g}$.
If all such compositions are trivial,   we just set $S_1= S^{(0)}$. Otherwise,   if for some $t_1,  t_2 \leq 1,   w=lcm(\overline{D^{t_{1}}f},  \overline{D^{t_{2}}g})$,   the non-trivial composition
$
 [D^{t_1}f,  D^{t_2}g]_{w}=\sum_{i\in I}\beta_i w_i=h,
 $
then $|\overline{h}|_{D \cup X}=|w|_{D\cup X} \geq 2$. We collect all such $h$ to make a new set $H_0$ and  denote $S_1=S^{(0)}\cup H_{0}$.
It is clear that each $h\in H_{0}$ is $D\cup X$-homogeneous and we call $|\overline{h}|_{D\cup X}$ the $D\cup X$-length of $h$.
Now we reduce $S_{1}$ to a minimal set $S^{(1)}$.
Noting that $S^{(0)}$ is a minimal set,   if some inclusion composition is not trivial,   then it must involve
some element that is not in $S^{(0)}$. Furthermore,   each $h\in H_0$  has $D\cup X$-length at least $2$,   so every non-trivial inclusion composition that is
added also has $D\cup X$-length at least $2$. So if we
denote $S^{(1)}=S^{(0)}_{sub}\cup R^{(0)}$,   where $S^{(0)}_{sub}=S^{(1)}\cap S^{(0)}$ and $R^{(0)}=S^{(1)}\setminus S^{(0)}$,   then we get
 each $r\in R^{(0)}$,   $|\overline{r}|_{D \cup X}\geq 2$. For any $f,  g\in S^{(0)}_{sub}$,   if
$$
[D^{t_1}f,  D^{t_2}g]_{w}\equiv 0\ mod(S_1,   w),
$$
then
 $$
 [D^{t_1}f,  D^{t_2}g]_{w}\equiv 0\ mod(S^{(1)},   w)
 $$
by Lemma \ref{keep trivial}.
Continue this progress,   and suppose
 $$S_{n}=S^{(n-1)}\cup H_{n-1},  \  S^{(n)}=S^{(n-1)}_{sub}\cup R^{(n-1)},  $$
 where  $S^{(n)}$ is a minimal set and for any $h\in H_{n-1}$,   $  r\in R^{(n-1)}$,    $ |\overline{h}|_{D \cup X}\geq n+1,  \ |\overline{r}|_{D \cup X}\geq n+1.$
Then in order to get $S_{n+1}$,   for any $f,  g\in S^{(n)},  t_1,  t_2\in \mathbb{N},   t_1,  t_2 \leq n+1,   w=lcm(\overline{D^{t_{1}}f},  \overline{D^{t_{2}}g})$,
 we need to check composition $[D^{t_1}f,  D^{t_2}g]_{w}$ whenever $w$ is non-trivial.
If all such compositions are trivial,   we just set $S_{n+1}= S^{(n)}$; Otherwise,   say $[D^{t_1}f,  D^{t_2}g]_{w}=h$ is not trivial.
If $f,  g\in S^{(n-1)}_{sub}\subseteq S_{n},   \ 0 \leq  t_1,  t_2 \leq n $,
then by the construction,   $[D^{t_1}f,  D^{t_2}g]_{w}\equiv 0 \ mod(S_{n},  w )$,   and by Lemma \ref{keep trivial},
we get $[D^{t_1}f,  D^{t_2}g]_{w}\equiv 0 \ mod(S^{(n)},  w )$. Therefore,   if we have a non-trivial composition,   at least one of $f$ and $g$ is in $R^{(n-1)}$,   or at least one of $t_{1}$ and $t_{2}$ equals $n+1$.
Thus,   if we denote $S_{n+1}=S^{(n)}\cup H_{n}$,   then for any $h\in H_n$,   $|\overline{h}|_{D \cup X}\geq n+2$.
By the same reasoning as above,   if we continue to reduce $S_{n+1}$ to a minimal set  $S^{(n+1)}=S^{(n)}_{sub}\cup R^{(n)}$,   then we get for all $r\in R^{(n)},  \ |\overline{r}|_{D \cup X}\geq n+2.$
As a result,   we get the following lemma.

\begin{lemma}\label{length k}
For any $f\in S^{(n)}$,   if $|\overline{f}|_{D \cup X}\leq n,  $ then  $f\in S^{(l)}$ for any $l\geq n.$
\end{lemma}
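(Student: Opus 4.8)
The plan is to first establish the one-step version of the statement --- namely, that $f\in S^{(n)}$ with $|\overline f|_{D\cup X}\le n$ implies $f\in S^{(n+1)}$ --- and then to iterate it: since $|\overline f|_{D\cup X}\le n\le n+1$, the same statement applied at level $n+1$ yields $f\in S^{(n+2)}$, and so on, so that $f\in S^{(l)}$ for every $l\ge n$. Everything therefore reduces to the one-step claim, whose content is that $f$ is never altered or deleted during the passage $S_{n+1}=S^{(n)}\cup H_n\longrightarrow S^{(n+1)}$.

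To prove the one-step claim I would view this passage as a finite chain $S_{n+1}=T_0\to T_1\to\cdots\to T_m=S^{(n+1)}$ of one-step reductions, which terminates by the same $ord(\cdot)$-argument as in the proof of Lemma~\ref{keep trivial}. Recall that at a step $T_i\to T_{i+1}$ some $g'\in T_i$ is reduced by some $g''\in T_i$ with $g'\ne g''$ and $\overline{g'}=u\,\overline{D^{t}g''}$, and that $T_{i+1}$ is obtained by deleting $g'$ and possibly adjoining $h=\tfrac1{\alpha_1}g'-\tfrac1{\alpha_2}uD^{t}g''$, which is again $D\cup X$-homogeneous with $|\overline h|_{D\cup X}=|\overline{g'}|_{D\cup X}$ and $\overline h<\overline{g'}$. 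I would then induct on $i$, carrying the invariant that (a) $f\in T_i$, and (b) every $g\in T_i$ with $|\overline g|_{D\cup X}\le n$ belongs to $S^{(n)}$. The base case $T_0=S_{n+1}$ is immediate, since every element of $H_n$ has $D\cup X$-length $\ge n+2$.

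For the inductive step the only input I need is the elementary fact that $|\cdot|_{D\cup X}$ is additive over products in $[D^{\omega}X]$ and satisfies $|\overline{D^{t}w}|_{D\cup X}=|\overline w|_{D\cup X}+t$, so that $\overline{g'}=u\,\overline{D^{t}g''}$ forces $|\overline{g''}|_{D\cup X}\le|\overline{g'}|_{D\cup X}$. Granting this: if $f$ were the element being reduced, i.e. $f=g'$, then $|\overline{g''}|_{D\cup X}\le|\overline f|_{D\cup X}\le n$, so $g''\in S^{(n)}$ by (b); but then $f$ and $g''$ are two distinct members of the minimal set $S^{(n)}$ with $\overline f=u\,\overline{D^{t}g''}$, which is impossible --- hence $f\in T_{i+1}$, giving (a). Likewise, if the adjoined $h$ satisfied $|\overline h|_{D\cup X}\le n$, then $|\overline{g'}|_{D\cup X}=|\overline h|_{D\cup X}\le n$, so $g'\in S^{(n)}$ by (b), and also $g''\in S^{(n)}$ since $|\overline{g''}|_{D\cup X}\le|\overline{g'}|_{D\cup X}\le n$ --- again two distinct members of the minimal set $S^{(n)}$ with $\overline{g'}=u\,\overline{D^{t}g''}$, a contradiction; so every element of $T_{i+1}$ of $D\cup X$-length $\le n$ lies in $S^{(n)}$, giving (b). Hence $f\in T_m=S^{(n+1)}$, and the iteration described above completes the proof.

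The genuinely delicate point is this combinatorial bookkeeping along the reduction chain: one must guarantee that a replacement step cannot ``manufacture'' a short element lying outside $S^{(n)}$. Invariant (b), combined with the minimality of $S^{(n)}$ and the fact that replacements preserve $D\cup X$-length, is exactly what forbids this, and I expect verifying it to be the main obstacle --- though it is routine rather than deep.
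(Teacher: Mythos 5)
Your proof is correct and takes essentially the same route as the paper's: the paper's one-line argument is exactly that any element added after $S^{(n)}$ has $D\cup X$-length exceeding $n$ (hence cannot reduce $f$, since $\overline{f}=u\,\overline{D^{t}g}$ forces $|\overline{g}|_{D\cup X}\le|\overline{f}|_{D\cup X}\le n$), while minimality of $S^{(n)}$ rules out reduction by its other members. Your invariant (b) merely makes explicit the bookkeeping --- that intermediate replacement steps cannot manufacture a short element outside $S^{(n)}$ --- which the paper leaves implicit.
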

\begin{proof}
Noting that after we get $S^{(n)}$,   any composition that may be added afterwards  has $D\cup X$-length more than $n$,   but $f$ can not be reduced
by any element which has $D\cup X$-length more than $n$ or by element in $ S^{(n)}\setminus \{f\}$.
\end{proof}

Define
 $$ \widetilde{S}=\{ f\in \bigcup_{n\geq 0}S^{(n)}  \mid f\in S^{(|\overline{f} |_{D \cup X})}  \} .$$
Then by Lemma \ref{length k},   we have
  $$ \widetilde{S}=\{ f\in \bigcup_{n\geq 0}S^{(n)}  \mid f\in S^{(l)},   \ for \ any \ l\geq |\overline{f} |_{D \cup X}     \}.$$

\begin{lemma}\label{GSB}
$Id[S]=Id[\widetilde{S}]$ and $\widetilde{S}$ is a Gr\"{o}bner-Shirshov basis in $k\{X\}$.
\end{lemma}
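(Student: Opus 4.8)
The plan is to verify the two assertions in turn, relying on the iterative construction of the sets $S^{(n)}$ and on Lemma \ref{keep trivial} and Lemma \ref{length k}. First I would prove $Id[S]=Id[\widetilde{S}]$. By Lemma \ref{keep trivial}, each reduction step $S_n\longrightarrow S^{(n)}$ preserves the ideal, i.e.\ $Id[S_n]=Id[S^{(n)}]$, and each composition-adjoining step $S^{(n-1)}\longrightarrow S_n=S^{(n-1)}\cup H_{n-1}$ adds only elements of $Id[S^{(n-1)}]$ (compositions of elements already in the set), so $Id[S_n]=Id[S^{(n-1)}]$. Hence $Id[S]=Id[S^{(0)}]=Id[S^{(n)}]$ for every $n$. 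Since every $f\in\widetilde{S}$ lies in some $S^{(n)}$ we get $\widetilde{S}\subseteq Id[S]$, so $Id[\widetilde{S}]\subseteq Id[S]$. For the reverse inclusion I would take $f\in S=S^{(0)}$: by Lemma \ref{length k}, $f\in S^{(l)}$ for all $l\geq|\overline{f}|_{D\cup X}$ provided $f$ survives all reductions; if $f$ is reduced away at some stage, then (by the reduction rules) it becomes a linear combination of $S^{(m)}$-words of no larger $D\cup X$-length, and one descends by induction on $(|\overline{f}|_{D\cup X},\overline{f})$ to express $f$ in terms of elements of $\widetilde{S}$. Thus $S\subseteq Id[\widetilde{S}]$ and $Id[S]\subseteq Id[\widetilde{S}]$.

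Next I would show $\widetilde{S}$ is a Gr\"obner-Shirshov basis, i.e.\ every composition $[D^{t_1}f,D^{t_2}g]_w$ with $f,g\in\widetilde{S}$ is trivial $mod(\widetilde{S},w)$. Fix such $f,g$ and $w=lcm(\overline{D^{t_1}f},\overline{D^{t_2}g})$, and set $N=|w|_{D\cup X}$. Since $|\overline{f}|_{D\cup X}\le N$ and $|\overline{g}|_{D\cup X}\le N$, both $f$ and $g$ belong to $S^{(N)}$ by the definition of $\widetilde{S}$ together with Lemma \ref{length k}. By the construction of $S^{(N+1)}$ from $S^{(N)}$ (we check exactly the compositions with $t_1,t_2\le N+1$, and $t_1,t_2$ are bounded by the exponents occurring in $w$, hence by $N$), the composition $[D^{t_1}f,D^{t_2}g]_w$ was examined, and either it was already trivial $mod(S^{(N)},w)$ or the normal form $h$ was thrown into $H_N$ and then incorporated during the reduction to $S^{(N+1)}$; in either case $[D^{t_1}f,D^{t_2}g]_w\equiv0\ mod(S^{(N')},w)$ for all large $N'$. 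Finally I would upgrade "$\equiv 0\ mod(S^{(N')},w)$" to "$\equiv 0\ mod(\widetilde{S},w)$": any $S^{(N')}$-word $u D^{t}s$ appearing with $\overline{u D^{t}s}<w$ has $D\cup X$-length $<N$, so by Lemma \ref{keep trivial} it rewrites, without raising $D\cup X$-length and without raising the leading word, into a combination of $\widetilde{S}$-words, and by induction on the leading word all terms land in $\widetilde{S}$-words below $w$.

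The step I expect to be the main obstacle is the very last one — carefully justifying that a congruence $\equiv 0$ modulo the intermediate sets $S^{(N')}$ really descends to a congruence modulo the limit set $\widetilde{S}$, controlling both the $D\cup X$-length (so that one only ever needs finitely many $S^{(n)}$) and the leading word (so the rewriting terminates). This requires a double induction (on $D\cup X$-length and on the order $<$ of the leading monomial) and careful bookkeeping that the elements of $S^{(N')}\setminus\widetilde{S}$ used in the expressions are themselves reducible, via Lemma \ref{keep trivial}, to $\widetilde{S}$-words of no larger weight and no larger leading term; the homogeneity statements already established in Lemma \ref{keep trivial} and in the construction are exactly what make this bookkeeping go through.
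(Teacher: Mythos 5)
Your proposal is correct and follows essentially the same route as the paper: ideal equality via Lemma \ref{keep trivial}, and the Gr\"obner--Shirshov property by locating a finite stage at which each composition is checked and then using the $D\cup X$-length bound to see that the elements occurring in its trivialization already lie in $\widetilde{S}$. The only differences are cosmetic: the paper indexes the stage by $l=|\overline{f}|_{D\cup X}+|\overline{g}|_{D\cup X}+t_1+t_2$ rather than your $N=|w|_{D\cup X}$, and the final ``descent'' you worry about needs no double induction --- by $D\cup X$-homogeneity each $s_i$ appearing satisfies $|\overline{s_i}|_{D\cup X}\leq|w|_{D\cup X}$ and so is already in $\widetilde{S}$.
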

\begin{proof}
Since $Id[S]=Id[S^{(0)}]=\dots =Id[S^{(n)}]$ for any $ n\geq 0$,   we have $Id[\widetilde{S}]\subseteq Id[S]$. On the other hand,
for any $f\in S$,   if $|\overline{f}|_{D\cup X}=n$,   then by Lemma \ref{keep trivial},   $f=\sum\beta_{q}u_{q}D^{t_{q}}s_{q}$,
where each $s_{q}\in S^{(n)}$ and $|\overline{s_{q}}|_{D \cup X}\leq n$,   i.e.,   $s_{q}\in \widetilde{S}$.
Therefore,   $Id[S] =Id[\widetilde{S}]$. For
any $f,  g\in \widetilde{S},   \ t_{1},   t_{2}\in \mathbb{N},  \ w=lcm(\overline{D^{t_{1}}f},  \overline{D^{t_{2}}g})$,   let $l\triangleq |\overline{f}|_{D \cup X}+|\overline{g}|_{D \cup X}+t_1+t_2.$
If there exists composition $[D^{t_1}f,  D^{t_2}g]_{w}$,   then $[D^{t_1}f,  D^{t_2}g]_{w}\equiv 0 \ mod(S_{l+1},  w)$ by construction.
And by Lemma \ref{keep trivial},   we have $[D^{t_1}f,  D^{t_2}g]_{w}\equiv 0 \ mod(S^{(l+1)},   w)$,   i.e.,   $[D^{t_1}f,  D^{t_2}g]_{w}=\sum\beta_{i}u_{i}D^{t_{i}}s_{i}$,
 where each $s_{i}\in S^{(l+1)}$ and $|\overline{s_{i}}|_{D \cup X}\leq |w|_{D\cup X}< l+1.$
Thus by the definition of $\widetilde{S}$,    we get $[D^{t_1}f,  D^{t_2}g]_{w}\equiv 0 \ mod(\widetilde{S},   w)$.
\end{proof}

\begin{proposition}\label{com diff solvable}
If $|S|<\infty$ and $S$ is $D\cup X$-homogeneous,   then $k\{X|S\}$ has a solvable word problem.
\end{proposition}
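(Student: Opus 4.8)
The plan is to recast the word problem for $k\{X|S\}$ as the membership problem for $Id[S]$ and then to run the bookkeeping of the preceding lemmas. By Lemma~\ref{GSB} we have $Id[S]=Id[\widetilde{S}]$ and $\widetilde{S}$ is a Gr\"{o}bner-Shirshov basis in $k\{X\}$, so by Theorem~\ref{differential cd lemma} a nonzero $h\in k\{X\}$ lies in $Id[\widetilde{S}]$ if and only if $\overline{h}$ is divisible by some $\overline{D^{t}s}$ with $s\in\widetilde{S}$, $t\in\mathbb{N}$; equivalently, $h\in Id[\widetilde{S}]$ iff $h$ reduces to $0$ under the usual process of repeatedly replacing a monomial $u\overline{D^{t}s}$ of the current polynomial by $u\bigl(\overline{D^{t}s}-LC(D^{t}s)^{-1}D^{t}s\bigr)$. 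Such a reduction terminates, since every monomial it introduces is strictly smaller, in the well-order on $[D^{\omega}X]$, than the monomial it removes, so the multiset of monomials strictly decreases and there is no infinite chain. If $\widetilde{S}$ were finite this would already give an algorithm; the only obstacle is that $\widetilde{S}$ may be infinite, and the core of the argument is to show that only an effectively computable finite part of $\widetilde{S}$ is ever consulted for a given input.

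First I would note that $\widetilde{S}$ is $D\cup X$-homogeneous: $S^{(0)}$ is a reduction of the $D\cup X$-homogeneous set $S$, and every set produced in the construction --- both the compositions collected into the $H_{n}$'s and their reductions --- is again $D\cup X$-homogeneous. Hence for $s\in\widetilde{S}$ each $D^{t}s$ is $D\cup X$-homogeneous with $|\overline{D^{t}s}|_{D\cup X}=|\overline{s}|_{D\cup X}+t$, and $Id[\widetilde{S}]$ is a $D\cup X$-graded subspace of $k\{X\}$. Consequently, given $f,g\in k\{X\}$, the element $h=f-g$ lies in $Id[S]$ iff each of its finitely many $D\cup X$-homogeneous components does, so we may assume $h$ is $D\cup X$-homogeneous, say of $D\cup X$-length $N$. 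Any reduction of such an $h$ modulo $\widetilde{S}$ produces only monomials of $D\cup X$-length $N$, so it can only ever use a pair $(s,t)$ with $s\in\widetilde{S}$ and $|\overline{s}|_{D\cup X}+t\le N$; in particular only the finite set $\widetilde{S}^{\le N}:=\{s\in\widetilde{S}\mid |\overline{s}|_{D\cup X}\le N\}$ and the finitely many admissible $t$ for each such $s$ are relevant.

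Next I would argue that $\widetilde{S}^{\le N}$ is computable. Lemma~\ref{keep trivial} produces the minimal set $S^{(0)}$ effectively. Then one carries out the iterative construction $S^{(0)}\to S_{1}\to S^{(1)}\to\cdots\to S^{(N)}$ from the text: to pass from $S^{(n)}$ to $S^{(n+1)}$ one examines only the finitely many compositions $[D^{t_1}f,D^{t_2}g]_{w}$ with $f,g\in S^{(n)}$ and $t_1,t_2\le n+1$, collects the non-trivial ones into a finite set $H_{n}$ (each member of $D\cup X$-length $\ge n+2$), and reduces $S^{(n)}\cup H_{n}$ to a minimal set by Lemma~\ref{keep trivial}; an induction shows every $S^{(n)}$ is finite, so every step is a finite, effective computation. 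By Lemma~\ref{length k} an element of $\widetilde{S}$ of $D\cup X$-length at most $N$ already lies in $S^{(N)}$, so one obtains $\widetilde{S}^{\le N}=\{f\in S^{(N)}\mid |\overline{f}|_{D\cup X}\le N\text{ and }f\in S^{(|\overline{f}|_{D\cup X})}\}$, which can be read off from the already computed $S^{(0)},\dots,S^{(N)}$.

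Finally I would assemble the decision procedure. On input $f,g\in k\{X\}$: set $h=f-g$, let $N$ be the largest $D\cup X$-length of a monomial occurring in $h$, compute $\widetilde{S}^{\le N}$ as above, reduce $h$ modulo $\widetilde{S}^{\le N}$ (using the admissible derivatives $D^{t}s$) to a normal form $h'$, and declare $f=g$ in $k\{X|S\}$ precisely when $h'=0$. For correctness: reductions preserve membership in $Id[\widetilde{S}]$, so if $h'=0$ then $h\in Id[\widetilde{S}]=Id[S]$; conversely, if $h'\neq0$ then $\overline{h'}$ has $D\cup X$-length $N$ and, being a normal form, is divisible by no $\overline{D^{t}s}$ with $s\in\widetilde{S}$ (any such divisor would itself have $D\cup X$-length $\le N$, so $(s,t)$ would be admissible and $h'$ still reducible), whence $h'\notin Id[\widetilde{S}]$ by Theorem~\ref{differential cd lemma}, so $h\notin Id[S]$ and $f\neq g$. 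I expect the step that needs the most care to be the third paragraph --- verifying that the (a priori non-terminating) construction of $\widetilde{S}$ nonetheless delivers, after boundedly many effective steps, the exact finite truncation $\widetilde{S}^{\le N}$ required for an input of $D\cup X$-length $N$; this is precisely where $D\cup X$-homogeneity, Lemma~\ref{length k}, and the well-orderedness of $[D^{\omega}X]$ all come into play.
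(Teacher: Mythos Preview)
Your proof is correct and follows essentially the same approach as the paper's: both use Lemma~\ref{GSB} and Theorem~\ref{differential cd lemma} to reduce the membership problem to reduction modulo the finite, effectively computable set $S^{(N)}$ (or its subset $\widetilde{S}^{\le N}$), exploiting $D\cup X$-homogeneity to bound the $D\cup X$-length throughout. The only cosmetic difference is that the paper reduces directly by all of $S^{(n)}$ rather than first extracting $\widetilde{S}^{\le N}\subseteq S^{(N)}$, which is slightly simpler but equivalent since $Id[S^{(n)}]=Id[S]$.
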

\begin{proof}
For any $f=\sum\beta_i w_i \in k\{X\}$,   where $w_1>w_2>\dots  $. We may assume that $|w_{i}|_{D \cup X}\leq n $.   By Lemma \ref{GSB} and Theorem \ref{differential cd lemma},
 $f\in Id[\widetilde{S}]$ implies that $w_{1}=u\overline{D^{t}s}$ for some $s\in \widetilde{S}$.
Moreover,   if $w_{1}=u\overline{D^{t}s}$ for some $s\in \widetilde{S}$,   then $s\in S^{(n)}$. Note that $S^{(n)}$ is a finite $D\cup X$-homogeneous set that can be constructed effectively from $S$.
After reducing $f$ by such $s$,   we get a new polynomial
$$
f'=f-\frac{\beta_{1}}{LC(uD^{t}s)}uD^{t}s=\sum\beta'_{i'} w_{i'},
$$
with each $|w_{i'}|_{D \cup X}\leq n $. Continue to reduce $f'$ by elements in $S^{(n)}$.
If finally we reduce $f'$ by $S^{(n)}$ to $0$,   then $f\in Id[S]$. Otherwise,   $f\notin Id[S]$.
In particular,   if $w_{1}\neq u\overline{D^{t}s}$ for any $s\in S^{(n)},t\in \mathbb{N}$,   then $w_{1}\neq u\overline{D^{t}s}$ for any $s\in \widetilde{S},t\in \mathbb{N}$, and thus $f\notin Id[S]$.
\end{proof}

\ \

Since for any $f\in GDN(X)$,   if
$f=\sum_{1\leq i \leq n}\beta_i w_i $ is homogeneous in the sense that $|w_{1}|=\dots =|w_{n}|,  $
 then $f$ is $D\cup X$-homogeneous because $|w|_{D\cup X}=2|w|+wt(w)$ for any $w\in [D^{\omega}X]$. Given $GDN(X|S)$,   if $|S|<\infty$ and $S$ is homogeneous,
  then taking $S$ as a subset of $k\{X\}$,   $S$ is $D\cup X$-homogeneous.
Thus we can get a Gr\"{o}bner-Shirshov basis $\widetilde{S}$ in $k\{X\}$.
Then by Theorem \ref{Novikov PBW} and Proposition \ref{com diff solvable},
we immediately get the following proposition.

\begin{proposition}If $|S|<\infty$ and $S\subseteq GDN(X)$ is homogeneous,
then $GDN(X|S)$ has a solvable word problem.
\end{proposition}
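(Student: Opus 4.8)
The plan is to deduce the proposition from the two results just established, Theorem \ref{Novikov PBW} and Proposition \ref{com diff solvable}, by transporting the membership question from $GDN(X)$ to its universal enveloping commutative differential algebra $k\{X\}$. Recall that the word problem for $GDN(X|S)$ asks, given $f,g\in GDN(X)$, to decide whether $f=g$ in $GDN(X|S)$, i.e. whether $f-g\in Id(S)$; so it suffices to decide $f\in Id(S)$ for a given $f\in GDN(X)$. First I would note that since $S$ is homogeneous (all monomials of any $s\in S$ have the same length) and every word $w\in[D^{\omega}X]$ satisfies $|w|_{D\cup X}=2|w|+wt(w)$ with $wt(w)=-1$ on $GDN(X)$, the set $S$, viewed inside $k\{X\}$, is a finite $D\cup X$-homogeneous set. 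Hence the reduction procedure of Lemma \ref{keep trivial} and the construction preceding Lemma \ref{GSB} apply and produce a Gr\"{o}bner-Shirshov basis $\widetilde{S}$ of $Id[S]$ in $k\{X\}$; this $\widetilde{S}$ is an admissible choice of the set $S^{c}$ in Theorem \ref{Novikov PBW}.

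Next I would invoke Theorem \ref{Novikov PBW}: it gives $Id(S)=Id(S')$ for a Gr\"{o}bner-Shirshov basis $S'$ of $GDN(X)$ and exhibits $GDN(X|S)$ as a subalgebra of $k\{X|S\}$ with linear basis $Irr(S')=GDN(X)\cap Irr[\widetilde{S}]$. Combined with the identity $GDN(X)\cap Id[S]=Id(S)$ — which is Lemma \ref{Novikov ideal} together with the weight grading of $k\{X\}$, since $S$ lies in the weight $-1$ component and $GDN(X)$ is exactly that component — this means: for $f\in GDN(X)$, regarded as an element of $k\{X\}$, one has $f=0$ in $GDN(X|S)$ if and only if $f\in Id[S]$ in $k\{X\}$. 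Since $S$ is a finite $D\cup X$-homogeneous subset of $k\{X\}$, Proposition \ref{com diff solvable} supplies an algorithm deciding $f\in Id[S]$: compute $n=|f|_{D\cup X}$, build the finite computable set $S^{(n)}$, and reduce $f$ by $S^{(n)}$ to a normal form, which is $0$ precisely when $f\in Id[S]$. Chaining these steps yields the desired algorithm for the word problem of $GDN(X|S)$.

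The only genuinely delicate point — and it is precisely the content of Proposition \ref{com diff solvable}, which we are entitled to assume — is effectiveness in spite of $\widetilde{S}$ possibly being infinite. The resolution is the stratification of the Buchberger-Shirshov process by $D\cup X$-length: every composition produced after the stage $S^{(n)}$ has $D\cup X$-length exceeding $n$, so to reduce a polynomial all of whose monomials have $D\cup X$-length at most $n$ one never needs anything beyond the finite, explicitly constructible $S^{(n)}$ (Lemmas \ref{keep trivial} and \ref{length k}); and because the weight is pinned to $-1$ throughout $GDN(X)$, bounding the $D\cup X$-length also bounds the ordinary length, so successive reductions strictly lower the leading word and must terminate. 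Thus I do not expect any essential obstacle beyond carefully assembling the cited results; all of the real work has already been carried out in the preceding lemmas.
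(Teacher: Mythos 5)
Your proposal is correct and follows essentially the same route as the paper: observe that homogeneity of $S\subseteq GDN(X)$ gives $D\cup X$-homogeneity in $k\{X\}$ via $|w|_{D\cup X}=2|w|+wt(w)$ with $wt(w)=-1$, then combine Theorem \ref{Novikov PBW} (equivalently, $GDN(X)\cap Id[S]=Id(S)$) with Proposition \ref{com diff solvable}. The extra detail you supply about reducing the word problem to ideal membership in $k\{X\}$ is exactly what the paper leaves implicit in its ``we immediately get'' step.
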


\section{A subalgebra of $ GDN(a)$ }

We construct a non-free subalgebra $A$ of the free Gelfand-Dorfman-Novikov algebra $GDN(a)$ over a field of characteristic $0$,   which implies that the variety
of Gelfand-Dorfman-Novikov algebras is not Schreier.

By Proposition 1 in  \cite{variety property},   we immediately get the following lemma.

\begin{lemma}(\cite{variety property}) \label{generator number} The following statements hold:
\begin{enumerate}
\item[(i)] The rank of a free Gelfand-Dorfman-Novikov algebra is uniquely determined,   where the rank means the number of free generators.
\item[(ii)] In a free Gelfand-Dorfman-Novikov algebra of rank $n$,   any set of $n$ generators is a set of free generators.
\item[(iii)] A free Gelfand-Dorfman-Novikov algebra of rank $n$ can't be generated by less than $n$ elements.
\end{enumerate}
\end{lemma}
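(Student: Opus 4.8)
The plan is to deduce all three assertions from the elementary structure of $F := GDN(X)$ modulo the span of its products. From the tableau basis of $GDN(X)$ recalled above, $F$ is graded by degree, $F = \bigoplus_{d\ge 1}F_d$; this grading is inherited from the free nonassociative algebra on $X$ because the two defining identities of a right-GDN algebra are multilinear (homogeneous of degree $3$), so the ideal they generate is graded. The degree-$1$ tableaux are exactly the generators, hence $F_1 = \operatorname{span}_k X$ and $\dim_k F_1 = |X|$. Moreover, inspecting the shape of a tableau $w = Y_p\circ(Y_{p-1}\circ(\cdots))$ (and $w = (\cdots)\circ a_{1,r_1+1}$ in the one-row case) shows that every tableau of degree $\ge 2$ is a $\circ$-product of two elements of strictly smaller degree; hence the subspace $F^2 := \operatorname{span}_k\{u\circ v \mid u,v\in F\}$ coincides with $\bigoplus_{d\ge 2}F_d$, so that $F/F^2 \cong F_1$ as vector spaces, with $\dim_k(F/F^2) = |X|$ and with zero multiplication. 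Note that $F^2$ is defined without reference to any generating set, so $\dim_k(F/F^2)$ is an invariant of the algebra $F$.

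Given this, (i) and (iii) are immediate. For (i): if $F$ is free on $X$ and also on $Z$, then $|X| = \dim_k(F/F^2) = |Z|$. For (iii): if a free algebra $F$ of rank $n$ is generated by $m$ elements, their images generate $F/F^2$ as an algebra, hence — the multiplication there being trivial — span it as a vector space, so $n = \dim_k(F/F^2)\le m$.

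For (ii), let $F$ be free on $X = \{x_1,\dots,x_n\}$ and let $Y = \{y_1,\dots,y_n\}$ generate $F$. Write $y_i = \ell_i + r_i$ with $\ell_i\in F_1$ and $r_i\in F^2$. Since $Y$ generates $F$, the images of the $y_i$ span $F/F^2\cong F_1$, so the $\ell_i$ span $F_1$; being $n$ in number they form a basis of $F_1$. Any linear automorphism of $F_1$ extends to a degree-preserving automorphism of $F$ (a linear substitution of generators induces an algebra endomorphism by freeness, and an invertible such substitution induces an automorphism), and automorphisms carry free generating sets to free generating sets; applying the automorphism sending $\ell_i\mapsto x_i$ we may therefore assume $y_i = x_i + r_i$ with $r_i\in\bigoplus_{d\ge 2}F_d$. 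Now consider the endomorphism $\psi\colon F\to F$ with $\psi(x_i) = y_i$: it is surjective because $Y$ generates $F$, and it respects the descending filtration $F_{\ge d} = \bigoplus_{e\ge d}F_e$ and induces the identity on each quotient $F_{\ge d}/F_{\ge d+1} = F_d$ (the $r_i$ contribute only in degrees $\ge 2$). Hence $\psi$ is injective — if $\psi(f) = 0$ with $f\ne 0$, comparing the lowest-degree homogeneous components of $f$ and $\psi(f)$ gives a contradiction — so $\psi$ is an automorphism and $Y = \psi(X)$ is a free generating set.

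The only delicate point is the reduction step in (ii): one must verify that a linear change of generators extends to a (degree-preserving) automorphism, so that the normalization $\ell_i = x_i$ is harmless, and that $\psi$ genuinely preserves the degree filtration; both hinge on the multilinearity of the GDN identities, which is exactly what makes the argument work. Alternatively, as the paper does, one may simply invoke Proposition~1 of \cite{variety property}, from which the lemma is quoted.
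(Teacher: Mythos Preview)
Your argument is correct. The paper itself gives no proof here: it simply quotes the lemma from Burgin--Artamonov \cite{variety property} with the sentence ``By Proposition~1 in \cite{variety property}, we immediately get the following lemma,'' so there is nothing to compare against beyond the citation you already note in your final paragraph.

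What you have written is the standard self-contained argument via the invariant $\dim_k(F/F^2)$, specialized to GDN algebras using the tableau basis to identify $F^2$ with $\bigoplus_{d\ge 2}F_d$. This is exactly the mechanism behind the general result in \cite{variety property} (which treats arbitrary varieties of linear $\Omega$-algebras defined by multilinear identities), so your proof is not a genuinely different route but rather an unpacking of the cited proposition in the present setting. The benefit is self-containment and the explicit check, via the tableau shape, that every basis element of degree $\ge 2$ is a $\circ$-product; the cost is only length. One minor remark: in part~(ii) your injectivity argument for $\psi$ already forces bijectivity once surjectivity is known, and the reduction ``$\ell_i\mapsto x_i$'' is safe precisely because the normalized $Y$ still generates $F$ (being the image of a generating set under an automorphism) --- you handle this correctly, but it is worth stating that step explicitly since it is where a careless reader might worry about circularity.
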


In this subsection,   we consider the free Gelfand-Dorfman-Novikov algebra $GDN(a)$ generated by one element $a$.
\begin{theorem}
Let $A= \langle a\circ a,   (a\circ a)\circ a,   ((a\circ a)\circ a)\circ a \rangle $ be the subalgebra of the free Gelfand-Dorfman-Novikov algebra $GDN(a)$ generated by the set $\{a\circ a,   (a\circ a)\circ a,   ((a\circ a)\circ a)\circ a\}$. Then $A$ is not free.
\end{theorem}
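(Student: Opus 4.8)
The strategy is to produce one explicit relation satisfied by the three generators of $A$ that cannot hold in a free Gelfand--Dorfman--Novikov algebra on three free generators, and then to use Lemma \ref{generator number} to show that "$A$ free" would force exactly this impossible situation. I work throughout inside $GDN_{-1}(a)\subseteq k\{a\}$ (legitimate by the isomorphism $\varphi$ recalled in Section 2) and abbreviate $b=a\circ a$, $c=b\circ a$, $d=c\circ a$ for the three generators of $A$. These are homogeneous of degrees $2,3,4$ in the grading of $GDN(a)$ by the number of occurrences of $a$, so $A=\bigoplus_{n\ge 2}A_n$ is a graded subalgebra.

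The key identity is
$$
b\circ d=c\circ c\qquad\text{in }GDN(a),
$$
and it is immediate in the differential-polynomial model: since $f\circ g=(Df)g$, we have $c=(Db)\,a$ and $d=(Dc)\,a$, whence $b\circ d=(Db)\,d=(Db)(Dc)\,a=(Dc)(Db)\,a=(Dc)\,c=c\circ c$, the middle step being commutativity of $k\{a\}$.

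Next I would determine the rank. Since $b,c,d$ have degrees $2,3,4$, every product of them has degree at least $4$; hence $A_2=k\,b$, $A_3=k\,c$, and $A_4$ is spanned by $d$ and $b\circ b$. A short computation in the linear basis of $GDN_{-1}(a)$ from Section 2 --- the one calculation that needs a little care --- shows $d$ and $b\circ b$ are linearly independent, so $\dim A_4=2$, while the degree-$4$ part $A^{\circ 2}\cap A_4$ of the span $A^{\circ 2}$ of all products of elements of $A$ is spanned by $b\circ b$ alone (every degree-$4$ product of elements of $A$ is a product of two elements of $A_2=k\,b$) and is therefore at most $1$-dimensional. Consequently the indecomposable quotient $A/A^{\circ 2}$ has dimension at least $3$ (at least $1$ in each of degrees $2,3,4$), so $A$ cannot be generated by fewer than $3$ elements; since $\{b,c,d\}$ does generate $A$, the minimal number of generators of $A$ is exactly $3$. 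Hence if $A$ were free of rank $r$, then $r\ge 3$ (a free generating set is a generating set) and $r\le 3$ (Lemma \ref{generator number}(iii)), so $r=3$.

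To conclude: if $A$ were free it would be free of rank $3$, so by Lemma \ref{generator number}(ii) the generating set $\{b,c,d\}$ would be a set of free generators, and hence the canonical homomorphism $\psi\colon GDN(\{x_1,x_2,x_3\})\to A$ with $x_1\mapsto b$, $x_2\mapsto c$, $x_3\mapsto d$ would be an isomorphism. But $x_1\circ x_3$ and $x_2\circ x_2$ are distinct basis elements of $GDN(\{x_1,x_2,x_3\})$ (they lie in different multidegrees), so $0\ne x_1\circ x_3-x_2\circ x_2\in\ker\psi$ by the identity above, contradicting injectivity of $\psi$. Therefore $A$ is not free, so the variety of Gelfand--Dorfman--Novikov algebras is not Schreier. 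The only genuine computations are the one-line identity $b\circ d=c\circ c$ and the linear independence of $b\circ b$ and $d$; I expect the latter (equivalently, $d\notin k\,(b\circ b)$, i.e.\ $\dim A_4=2$) to be the main point to verify, though it is only a brief calculation with the explicit differential polynomials.
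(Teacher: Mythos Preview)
Your proof is correct but takes a genuinely different route from the paper's. Both arguments hinge on the same identity $b\circ d=c\circ c$ (the paper writes it as $(a\circ a)\circ(((a\circ a)\circ a)\circ a)=((a\circ a)\circ a)\circ((a\circ a)\circ a)$), and both use Lemma~\ref{generator number} to reduce to showing $\{b,c,d\}$ cannot be a free generating set. The difference lies in how the hypothetical rank is pinned down. The paper simply splits into cases $\mathrm{rank}(A)\in\{1,2,3\}$ and disposes of ranks $1$ and $2$ by separate ad hoc degree/leading-term arguments (a direct contradiction for rank $1$, and a $2\times 2$ matrix computation for rank $2$), invoking the identity only in the rank-$3$ case. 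You instead compute intrinsically that the minimal number of generators of $A$ is $3$ via the graded quotient $A/A^{\circ 2}$, which forces $\mathrm{rank}(A)=3$ outright and eliminates the case analysis. Your approach is cleaner and more conceptual, at the cost of one extra verification (the linear independence of $d$ and $b\circ b$, which follows immediately from the presence of $a[2]a[-1]^3$ in $d$ but not in $b\circ b$); the paper's approach is more elementary but spends roughly equal effort on the rank-$1$ and rank-$2$ cases that your argument bypasses.
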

\begin{proof}
Suppose that $A$ is free. Then by Lemma \ref{generator number} $(iii)$,   we get $rank(A)\leq 3.$

If $rank(A)=3,  $ then by Lemma \ref{generator number},   $a\circ a,   (a\circ a)\circ a,   ((a\circ a)\circ a)\circ a$ are free generators. However,
$$(a\circ a)\circ (((a\circ a)\circ a)\circ a)=((a\circ a)\circ a)\circ ((a\circ a)\circ a),  $$
which means that $a\circ a,   (a\circ a)\circ a,   ((a\circ a)\circ a)\circ a$ are not free generators.

If $rank(A)=1$ and
$$f=\beta_{1}(a\circ a) + \beta_{2} (a\circ a)\circ a  +\sum\beta_{i}w_{i} $$
is a free generator of $A$,   where each $w_{i}$ has length at least $4$,   then
$$a\circ a= \gamma_{1}{f}+\sum\gamma_{j}f\circ f\circ f\circ \cdots \circ  f,  $$
where  $f$ occurs at least twice in each term of the second summand on the right side  and each of them is with some bracketing.
We can rewrite this formula to the following form:
$$a[0]a[-1]= \gamma_{1}f+\sum\lambda_{i_1,  i_2,  \dots  ,   i_{n}}(D^{i_1}f)(D^{i_2}f)\cdots (D^{i_n}f),   $$
where $i_{1}\geq i_2\geq \dots  \geq i_n\geq 0,   n\geq 2$. Then
each term in the second summand has leading term bigger than $a[0]a[-1]$. Since
$$\overline{(D^{i_1}f)(D^{i_2}f)\cdots (D^{i_n}f)}=(\overline{D^{i_1}f})(\overline{D^{i_2}f})\cdots (\overline{D^{i_n}f}),  $$
by analysing the leading terms of the left side and the right side,   we get each $\lambda_{i_1,  i_2,   \dots   ,   i_{n}}=0,  $ so $a[0]a[-1]= \gamma_{1}f,  $
i.e.,   $f=\frac{1}{\gamma_{1}}a\circ a$. However,   $(a\circ a)\circ a\notin \langle a\circ a\rangle = A.$ This is a contradiction.

If $rank(A)=2,  $ suppose
$$ f_{1}=\beta_{1}(a\circ a) + \beta_{2} ((a\circ a)\circ a)  +\sum\beta_{e}w_{e},       $$
$$f_{2}=\gamma_{1}(a\circ a) + \gamma_{2} ((a\circ a)\circ a)  +\sum\gamma_{e'}w_{e'},      $$
are free generators,   where each  $w_{e},  w_{e'}$  has length at least $4$. Say
$$
a\circ a=\lambda_{1}f_{1}+\lambda_{2}f_{2}+\sum{\lambda_{j_1,  j_2,   \dots   ,   j_{n}}}f_{j_1}\circ f_{j_{2}}\circ\cdots \circ f_{j_n},    $$
$$
(a\circ a)\circ a =\mu_{1}f_{1}+\mu_{2}f_{2}+\sum{\mu_{q_1,  q_2,   \dots   ,   q_{m}}}f_{q_1}\circ f_{q_{2}}\circ \cdots \circ f_{q_m},
$$
$$
((a\circ a)\circ a)\circ a =\nu_{1}f_{1}+\nu_{2}f_{2}+\sum{\nu_{l_1,  l_2,  \dots ,   l_{r}}}f_{l_1}\circ f_{l_{2}}\circ\cdots \circ f_{l_r},
$$
where $  j_{1},   \dots    ,   j_{n},   q_{1},   \dots    ,   q_{m},   l_{1},   \dots    ,   l_{r}\in \{1,  2\},  $ and each term in the third summand on the right side of each equation is with some bracketing.
Rewriting the right sides into linear combination of basis of the free Gelfand-Dorfman-Novikov algebra $GDN(a)$ and comparing terms of length $2$ and $3$ on the left sides and the right sides,   we get
 \begin{gather*}\setlength\textwidth{13.50pt}
  \begin{pmatrix}  \beta_1 &  \gamma_1 \\ \beta_2 &  \gamma_2 \end{pmatrix}
\begin{pmatrix}  \lambda_1 &  \mu_1 \\ \lambda_2 &  \mu_2 \end{pmatrix} =
\begin{pmatrix}  1 &  0 \\ 0 &  1 \end{pmatrix}
 \end{gather*}
and
\begin{gather*}\setlength\textwidth{13.50pt}
  \begin{pmatrix}  \beta_1 &  \gamma_1 \\ \beta_2 &  \gamma_2 \end{pmatrix}
\begin{pmatrix}   \nu_1 \\ \nu_2 \end{pmatrix} =
\begin{pmatrix}  0\\  0 \end{pmatrix}.
 \end{gather*}
So $\nu_1 = \nu_2=0$ and
$$
((a\circ a)\circ a)\circ a =\sum{\nu_{l_1,  l_2,  \dots ,    l_{r}}}f_{l_1}\circ f_{l_{2}}\circ\cdots \circ f_{l_r}.
$$
However,   among the terms of the right side,   only $(a\circ a)\circ (a\circ a )$ has length $4$,   but $((a\circ a)\circ a)\circ a\neq \beta(a\circ a)\circ (a\circ a ),  $ for any $\beta \in k.$

Therefore,   $A$ is not free.
\end{proof}
\section*{Acknowledgment} We would like to thank the  referee for  valuable suggestions.

\end{document}